\documentclass[11pt]{amsart}

\usepackage{enumerate,url,amssymb,  mathrsfs, graphicx, pdfsync}
\newtheorem{theorem}{Theorem}

\newtheorem{lemma}[theorem]{Lemma}
\newtheorem*{lemma*}{Lemma}

\newtheorem{corollary}[theorem]{Corollary}
\theoremstyle{definition}
\newtheorem{definition}[theorem]{Definition}
\newtheorem{example}[theorem]{Example}

\theoremstyle{remark}
\newtheorem{remark}[theorem]{Remark}

\numberwithin{equation}{section}



\newcommand{\R}{\mathbb{R}}

\newcommand{\onto}{\xrightarrow[]{{}_{\!\!\textnormal{onto\,\,}\!\!}}}
\newcommand{\into}{\xrightarrow[]{{}_{\!\!\textnormal{into\,\,}\!\!}}}

\newcommand{\bydef}{\stackrel{\textnormal{def}}{=\!\!=}}

\begin{document}

\pagestyle{plain}

\title{Complex  Harmonic Capacitors}

\author[T. Iwaniec]{Tadeusz Iwaniec}
\address{Department of Mathematics, Syracuse University, Syracuse,
NY 13244, USA}
\email{tiwaniec@syr.edu}

\author[J. Onninen]{Jani Onninen}
\address{Department of Mathematics, Syracuse University, Syracuse,
NY 13244, USA and  Department of Mathematics and Statistics, P.O.Box 35 (MaD) FI-40014 University of Jyv\"askyl\"a, Finland
}
\email{jkonnine@syr.edu}

\author[T. Radice]{Teresa Radice}
\address{Universit\'a degli Studi di Napoli ``Federico II'', Dipartimento di Matematica e Applicazioni ``R. Caccioppoli''
Via Cintia, 80126 Napoli, Italy}
\email{teresa.radice@unina.it}
\subjclass[2020]{Primary 31A05; Secondary 30G20}
	\date{\today}
	\keywords{Complex harmonic potential, Rad\'{o}-Kneser-Choquet Theorem}

\thanks{J. Onninen was supported by the NSF grant  DMS-2154943.}

\maketitle
\begin{abstract}
The concept of complex harmonic potential in a doubly connected condenser (capacitor)  is introduced as an analogue of the real-valued potential of an electrostatic vector field.  In this analogy the full differential
 of a complex potential plays the role of  the gradient of the scalar potential in the theory of electrostatic.
 The main objective in the non-static fields is to rule out having the  full differential vanish at some points. Nevertheless, there can be critical points  where the Jacobian determinant of the differential turns into zero. The latter is in marked contrast to the case of real-valued potentials.  Furthermore, the complex electric capacitor also admits an interpretation of the stored energy  intensively studied in the theory of hyperelastic deformations. 
 
Engineers interested in electrical systems, such as 
energy storage devises,  might also wish to envision complex capacitors as {\bf electromagnetic condensers} which,  generally, store more energy that the electric capacitors.
\end{abstract}

\section{Introduction} Let us first set up the key terminology.
\subsection{About domains}
 Throughout this text $\,\Omega \subset \mathbb R^2\,$ will be a bounded simply connected domain.  Thus its complement in the Riemann sphere  $\,\widehat{\mathbb R^2} = \mathbb S^2\,$, denoted by $\, \mathbf{G }\bydef \widehat{\mathbb R^2} \setminus \Omega\,$, is a continuum    (compact and connected).  Similarly, a bounded domain  $\,\Delta \subset \mathbb R^2\,$ is doubly connected if its complement $\,\widehat{\mathbb R^2} \setminus \Delta\,$ consists of two disjoint continua. These  definitions agree with a widely used homological characterization of multiple-connectivity. Since $\,\Delta\,$ is a domain (open and connected),  none of its two complementary continua disconects $\,\widehat{\mathbb R^2} \,$. 
 
 There  is another classical characterization of a doubly connected domain. Just remove from a simply connected domain $\,\Omega\,$ a nonempty continuum $\,\digamma \subset \Omega\,$ that does not disconnect $\,\mathbb C\,$; that is, $\,\mathbb C \setminus \digamma\,$ is connected. The boundary of  $\,\Delta= \Omega \setminus \digamma \,$ consists of two disjoint continua: one is just  $\,\partial \digamma\,$, called the \textit{inner boundary} of $\,\Delta\,$ ,  and the other one is  $\, \partial \mathbf {G} = \partial \Omega\,$, called the \textit{outer boundary} of $\,\Delta\,$.
In what follows $\,\Delta\,$ will be interpreted as  dielectric medium between two electrical conductors; that is, between $\,\partial \digamma\,$ and $\partial \Omega\,$.

\subsection{Electrostatic field}
Let us begin with the real-valued  function as a potential of an electrostatic field. The commonly used term potential of a vector field  $\,\mathbf E : \Delta \rightarrow \mathbb R^2\,$ in a domain $\,\Delta \subset \mathbb R^2\,$ refers to  a scalar function $\, u : \Delta \rightarrow  \mathbb R\,$  whose gradient defines the vector field; in symbols,  $\,\textbf{E} = \nabla u\,$. Quite often $\,u\,$ is treated as the potential of an electrostatic gradient field which minimizes an  energy integral,  subject to given boundary values. It is important to emphasize that \textit{non-static fields} cannot be represented through a scalar potential in this manner. This stands as the principal concern addressed in the present paper.

Before all else,  the reader may wish to envision the boundary  of  a simply connected domain $\,\Omega \subset \mathbb R^2\,$ as negatively charged conductor.  Then, an imaginary moving particle in $\Omega$ that is positively charged is pulled towards $\,\partial \Omega\,$ by the electric force.  As it approaches $\partial \Omega$, the electric field intensifies progressively.

\subsection{Capacitor / or condenser}\label{CondenserCapacitor}

Now suppose we have a nonempty continuum $\, \digamma \Subset \Omega\,$ (compact connected subset)  that does not disconnect $\,\mathbb C\,$. This gives rise to a doubly connected region  $\,\Delta  \bydef \Omega \setminus  \digamma\,$ for an electric field between $\,\partial \Omega\,$ and $\,\digamma\,$. Hereafter $\,\digamma\,$ is positively charged.  Such a pair $\, (\Omega , \digamma )\,$ is interpreted  as a capacitor (or condenser)  in which  an electric potential difference (a voltage) is applied across the terminals $\, \partial \Omega\,$ and $\partial \digamma$. Let us look at the capacitors  with two electrical conductors separated by a \textit{dielectric medium}  $\,\Delta = \Omega \setminus\digamma\,$, like $\,\partial \Omega\,$ and $\partial  \digamma$. Later we shall assume that  $\,\digamma\,$ does not degenerate to a point (because the isolated points are removable singularities for bounded harmonic functions).  In this way the complex potential  is a function $\,H : \Delta \rightarrow \mathbb C\,$. It defines a force that works to make the imaginary particles move from  $\, \digamma\,$ toward $\,\partial \Omega\,$. This force will be represented by the \textit{complex  differential 1-form} $\, \textnormal d H  \bydef  H_z \,\textnormal d  z  \; +  \; H_{\overline{z} }\,\textnormal d \overline{z}\,$.

\subsection{Nowhere vanishing static gradient fields}
Let us look again at the case  $\,\textbf{E } = \nabla  u\,$, where the scalar harmonic function $\,u\,$ assumes two different constant values, one  on each of two conductors (the boundary components of $\,\Delta\,$).

It is well known that a static \textit{harmonic field}  $\,\textbf{E } = \nabla  u\,$, with different constant values  of the potential function $\,u\,$ along $\,\partial \Omega\,$ and $\,\digamma\,$; say $\, u \equiv 0 \,$ on $\,  \partial \Omega\,$ and $\, u \equiv 1 \,$ on $\, \digamma \,$, does not vanish in $\,\Delta = \Omega \setminus \digamma\,$.  The proof can be easily  brought down to the case of a round annulus. Indeed, with the aid of a conformal change of variables, say   $\, \phi :  A(r,R) \bydef \{\xi :\;  r < |\xi| < R\,\} \onto \,\Delta\,$ (continuous up to the boundary or not),   we are led to examine the harmonic function $\, v(\xi) = u(\phi(\xi))\,$. This function, being uniquely determined by its boundary values,  turns out to be equal to   $ \;\log \frac {|\xi|}{R}\,/ \log \frac{r}{R}  \,$. Hence $ \nabla u (z) \not = 0$ for all $z\in \Delta$.  
Unfortunately, such a straightforward conformal change of variables does not always work. 
More comprehensive idea of the proof (for harmonic fields) is illustrated in the forthcoming Corollary \ref{scalarcase}, including the use of harmonic dendrites. These ideas also extend to $p$-harmonic fields.
Nonetheless, the following  nonlinear $\,p\,$-harmonic variant of this fact merits mentioning. 
\begin{theorem} \label{pharmonicGradient}
The gradient field $\,\textbf{E } = \nabla  u\,$, of a non-constant $\,p\,$-harmonic function $\,u : \Delta \rightarrow \mathbb R\,$,  that is  continuous on $\,\overline{\Delta}\,$ and assumes constant values on each of its  boundary components $\,\partial \digamma\,$ and $\,\partial \Omega\,$, does not vanish in  $\Delta = \Omega\setminus \digamma\,$.  
\end{theorem}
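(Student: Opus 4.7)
The strategy is an argument-principle count for the conjugate complex gradient
\[
f(z) \;\bydef\; u_x(z) - i\,u_y(z),
\]
whose zeros are precisely the critical points of $u$. After normalizing to $u\equiv 0$ on $\partial\Omega$ and $u\equiv 1$ on $\partial\digamma$, the strong maximum and minimum principles for the $p$-Laplacian give $0<u<1$ throughout $\Delta$ and rule out any interior local extremum of $u$.

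Two analytic facts drive the count. First, by Lewis's $C^{1,\alpha}$ regularity together with Manfredi's local analysis at $p$-harmonic critical points, the zero set of $f$ in $\Delta$ is discrete. Second, following Bojarski-Iwaniec, $f$ satisfies a Beltrami-type equation with constant ellipticity $K=K(p)$, hence is quasiregular in $\Delta$; therefore each zero has a positive integral local index and the argument principle reads
\[
\sum_{\{z_0\in D\,:\,\nabla u(z_0)=0\}} \mathrm{ind}(f,z_0) \;=\; \frac{1}{2\pi}\oint_{\partial D} d(\arg f)
\]
for any smooth $D\Subset\Delta$ whose boundary avoids the critical set. It thus suffices to exhibit an exhausting family of such $D$ for which the right-hand side is zero.

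The natural choice is $D_\varepsilon\bydef\{\varepsilon<u<1-\varepsilon\}$ with $\varepsilon>0$ a regular value of $u$ (available by Sard since $u\in C^{1,\alpha}$). Because $u$ has no interior extremum, every component of $\{u\le\varepsilon\}$ must meet $\partial\Omega$, as an interior component would force an interior minimum of $u$ in $\Delta$; combined with unique continuation for the $p$-Laplacian, this forces $\{u\le\varepsilon\}$ to be a single connected neighborhood of $\partial\Omega$ in $\overline{\Delta}$, and symmetrically for $\{u\ge 1-\varepsilon\}$. Thus $D_\varepsilon$ is topologically an annulus bounded by two smooth Jordan curves $\{u=\varepsilon\}$ and $\{u=1-\varepsilon\}$, along which $\nabla u$ is, respectively, the inward and outward unit normal of $D_\varepsilon$. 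Orienting $\partial D_\varepsilon$ so that $D_\varepsilon$ lies on its left (so counterclockwise on the outer component and clockwise on the inner one), a direct Gauss-map computation yields total winding $(+1)+(-1)=0$ for $\nabla u$ along $\partial D_\varepsilon$, and hence $0$ for $f$. The argument principle then forces the positive local indices of zeros in $D_\varepsilon$ to sum to zero, so $f$ has no zeros in $D_\varepsilon$; letting $\varepsilon\to 0$ exhausts $\Delta$.

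The step I expect to require the most care is the purely topological claim that $D_\varepsilon$ is annular for small regular $\varepsilon$ without any boundary regularity assumption on $\Omega$ or $\digamma$; this is precisely where the "harmonic dendrite" reasoning alluded to before Corollary \ref{scalarcase} enters. Once the annular topology is secured, the winding computation and the argument principle for quasiregular mappings are essentially mechanical.
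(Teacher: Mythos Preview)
Your argument is correct, and it follows a genuinely different route from the paper's. The paper argues by contradiction at the level-set through a putative critical point $a$: the connected component $\mathfrak D_a$ of $\{u=u(a)\}$ through $a$ is a compact planar graph in which every vertex (critical point) has even degree $\geqslant 4$; the Handshake Lemma gives $|E|\geqslant 2|V|$, and Euler's formula then yields at least three faces, hence at least two bounded complementary components of $\mathfrak D_a$. Each bounded component must contain $\digamma$ (else $u\equiv u(a)$ there, and unique continuation finishes), but $\digamma$ is a single continuum, a contradiction. Your proof instead globalizes: you show (via the maximum principle and unique continuation, exactly as you indicate) that for a regular value $\varepsilon$ the set $\{u=\varepsilon\}$ is a \emph{single} Jordan curve surrounding $\digamma$, so $D_\varepsilon$ is an annulus, and then you exploit that the Gauss map of an annulus has total winding $0$ to force the quasiregular $f=u_z$ to have degree $0$, hence no zeros, over $D_\varepsilon$.

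Two comparative remarks. First, your appeal to Sard is slightly off in this regularity class ($u$ is only $\mathscr C^{1,\alpha}$), but harmless: since $u_z$ is quasiregular its zero set is discrete, so the set of critical \emph{values} is countable and regular values are dense; this is the same mechanism the paper uses. Second, the paper's dendrone/Euler argument is a local, combinatorial count that never needs to know $D_\varepsilon$ is annular, and in exchange gives for free the finer structural information about the level set near a hypothetical critical point; your degree argument uses the annular topology of $\Delta$ (Euler characteristic $0$) directly and is arguably the more transparent explanation of \emph{why} double connectivity is exactly the right hypothesis. The topological step you flagged is indeed the only place requiring care, and it goes through cleanly: every component of $\{u=\varepsilon\}$ is a smooth Jordan curve that must enclose $\digamma$ (else $u$ is constant on the enclosed disk, contradicting unique continuation), and two such disjoint curves would bound an annulus in $\Delta$ with $u\equiv\varepsilon$ on its boundary, again a contradiction.
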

However, {Completely different approach is needed to deal with complex-valued potential functions.}

\subsection{Nowhere vanishing non-static full differential fields}
We finish the introduction by stating a particular case and  a direct consequence of our main result which is Theorem \ref{MainTheorem} in Section~\ref{sec:mainresult}.

\begin{theorem}\label{SpecialCase}
Let a continuous function  $\, H : \partial \Delta   \; \rightarrow \mathbb C\,$ satisfy:
\begin{itemize}
\item $\, H\,$ is constant on $\,\partial \digamma\,$
\item $\,H\,: \partial \Omega \onto \partial\mathcal Q\,$ is a homeomorphism (or a monotone  map) onto  the boundary of a convex domain $\,\mathcal Q\,$.
\end{itemize} 
Then the differential of the harmonic extension of $\,H\,$, still denoted by $\,H : \overline{\Delta}  \rightarrow \mathbb C\,$, does not vanish in $\,\Delta\,$. In symbols,
\begin{equation} \label{NonvanishingDifferential}
 \;\;\;\frac{1}{2}\, \big{\Vert }\;\textnormal d H(z) \;\big{\Vert }^2 \,=\,  \vert H_z(z)\vert^2    \; +  \; \vert H_{\overline{z} }(z)\vert^2\; \neq 0\;\;,\;\textnormal{ in}\;\; \Delta\,.
\end{equation}
\end{theorem}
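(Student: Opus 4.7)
Proof plan. Suppose for contradiction that $dH(z_0) = 0$ at some $z_0 \in \Delta$. The strategy is to test $H$ against real-linear functionals on the target and then run a Rad\'o-Kneser-Choquet (RKC)-style level-set argument for the resulting scalar harmonic function on the doubly connected $\Delta$. For each unit vector $v \in \s \subset \C$, the real harmonic function $u_v(z) := \re(\bar v\, H(z))$ is continuous on $\overline\Delta$, constant on $\partial\digamma$, and—because $H|_{\partial\Omega}$ is a homeomorphism (or monotone map) onto the convex Jordan curve $\partial\mathcal{Q}$—possesses the key \emph{convexity feature}: for every $c \in \R$, the set $\{z \in \partial\Omega : u_v(z) > c\}$ is a single connected arc of $\partial\Omega$ (possibly empty or all of $\partial\Omega$). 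The hypothesis $dH(z_0) = 0$ forces $\nabla u_v(z_0) = 0$ for every direction $v$.

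Setting $p := H|_{\partial\digamma}$ and $w_0 := H(z_0)$, one picks $v$ so that $u_v|_{\partial\digamma} < u_v(z_0)$: if $w_0 \ne p$, take $v = (w_0 - p)/|w_0 - p|$ (the borderline case $w_0 = p$ is reducible to this by a small perturbation). Since $H$ is harmonic, $H_z$ is holomorphic and $H_{\bar z}$ anti-holomorphic, each vanishing at $z_0$ to finite orders $m, n \ge 1$. Integrating $dH = H_z\,dz + H_{\bar z}\,d\bar z$ along the segment from $z_0$ to $z$ yields the local expansion
\[
H(z) - H(z_0) = \alpha\,(z - z_0)^{m+1} + \beta\,(\overline{z - z_0})^{n+1} + \text{higher order},
\]
with $\alpha, \beta \ne 0$. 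For generic $v$ the harmonic polynomial $u_v - u_v(z_0)$ vanishes to order $\min(m+1, n+1) \ge 2$ at $z_0$, so its local zero set is a star of at least $4$ real-analytic arcs, and the local super-level set $\{u_v > u_v(z_0)\}$ has $\ge 2$ connected sectors at $z_0$.

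Each outgoing branch at $z_0$ then extends as a maximal arc of the global level set $\mathcal{L} := \{u_v = u_v(z_0)\} \cap \Delta$, a real-analytic $1$-manifold away from its (isolated) critical points. By the choice of $v$, no branch can terminate on $\partial\digamma$; on $\partial\Omega$, the convexity feature limits $\mathcal{L} \cap \partial\Omega$ to at most two points. With $\ge 4$ branches competing for $\le 2$ terminal points, two of them must meet at a common boundary point or close into a loop inside $\Delta$, producing a Jordan curve $\gamma \subset \overline\Delta$ through $z_0$ on which $u_v \equiv u_v(z_0)$. Either $\gamma$ bounds a disk in $\Delta$—and the strong maximum principle then forces $u_v \equiv u_v(z_0)$ on that disk and so on all of $\Delta$, contradicting the nonconstant boundary data—or $\gamma$ encircles $\digamma$, bounding with $\partial\digamma$ an annular region $A \subset \Delta$ on which $u_v = u_v(z_0)$ along $\gamma$ and $u_v < u_v(z_0)$ along $\partial\digamma$. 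In the latter case the maximum principle yields $u_v < u_v(z_0)$ throughout $A$, contradicting the angular alternation at $z_0$ that places at least one ``$u_v > u_v(z_0)$" sector inside $A$.

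The main obstacle is the bookkeeping in the last step: one must track all outgoing branches at $z_0$, rule out anomalous global configurations, and handle possible further critical points on the same level. A standard device—perturbing $v$ by an infinitesimal rotation, which preserves both $\nabla u_v(z_0) = 0$ and the convexity feature—reduces matters to the case where $z_0$ is the unique critical point on the level $\mathcal{L}$, after which the RKC combinatorics closes the argument. The essential new ingredient relative to the simply connected case is the role of the constant-potential inner boundary: the strict inequality $u_v|_{\partial\digamma} < u_v(z_0)$ provides exactly the rigidity needed to rule out loops encircling $\digamma$ in the doubly connected setting.
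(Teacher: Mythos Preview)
Your overall strategy---test $H$ against a real linear functional $u_v=\Re(\bar v H)$ and run a level-set/branch-counting argument at the putative rank-zero critical point---is exactly the paper's. The decisive difference is the choice of the direction $v$, and your choice creates gaps that the paper's avoids.

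The paper takes $(\alpha,\beta)$ \emph{orthogonal} to $H(z_0)-H(\partial\digamma)$, so that $W(z_0)=W|_{\partial\digamma}$. Then, by convexity/starlikeness, at most two of the $\ge 4$ dendrite branches from $z_0$ can have their limit-sets on $\partial\Omega$; hence at least two branches terminate on $\partial\digamma$. Those two branches together with $\digamma$ (via Janiszewski's theorem) cut off a subregion of $\Delta$ on whose entire boundary $W\equiv 0$, forcing $W\equiv 0$ on $\Delta$ and contradicting the fact that no monotone map sends $\partial\Omega$ onto a line segment. No loop-versus-annulus casework is needed.

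Your choice of $v$ along $H(z_0)-p$ forces the opposite inequality $u_v|_{\partial\digamma}<u_v(z_0)$, so branches cannot reach $\partial\digamma$ and you must manufacture a loop. Two concrete problems arise:
\begin{itemize}
\item In your Case~B (the loop $\gamma$ encircles $\digamma$) you assert that ``angular alternation places at least one `$u_v>u_v(z_0)$' sector inside $A$''. This is false when $\gamma$ is formed by two \emph{adjacent} arcs and the single sector they enclose happens to be a ``$-$'' sector. In that configuration the remaining $\ge 2$ arcs lie outside $\gamma$ and one must analyze the further loop(s) they form; your proposal does not do this bookkeeping. (It can be completed, but not in one line.)
\item The ``borderline'' case $H(z_0)=p$ is not reducible by perturbing $v$: when $w_0=p$ one has $u_v(z_0)=u_v|_{\partial\digamma}$ for \emph{every} $v$, so your strict inequality is never available. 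This is precisely the situation the paper's orthogonal choice of $v$ is designed to handle uniformly.
\end{itemize}
In short, aligning $v$ so that the critical level \emph{coincides} with the inner boundary value---rather than separating from it---is the key simplification you are missing.
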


However, the Jacobian determinant may vanish at some points, see the example in  Section  \ref{Example}.
\begin{equation} \label{VanishingJacobian}
 \mathcal J_H(z)  \,\bydef \,  \vert H_z(z)\vert^2    \; -  \; \vert H_{\overline{z} }(z)\vert^2\; = 0 \;,\;\textnormal{ for some}\;\; z \in \Delta .
\end{equation}
 That being the case, the mapping  $\,H : \Delta \rightarrow \mathbb C\,$ may fail to be injective, contrasting with the classical  Rad\'{o}-Kneser-Choquet theorem (RKC) (Theorem \ref{pHarmonicRKC}, below) for simply connected domains.
\begin{remark} \label{NonvanishingDifferential} It seems that within  doubly connected variants of RKC-theorem, Theorem~\ref{SpecialCase}, there is a general rule : 
$$ \textnormal{\textit{The differential} }\; 
\,\textnormal d H  \bydef  H_z \,\textnormal d  z  \; +  \; H_{\overline{z} }\,\textnormal d \overline{z}\; \textnormal{ \textit{cannot vanish anywhere ? }} 
$$
\end{remark}

In this context it is tempting to turn to the celebrated J.C.C Nitsche conjecture, see \cite{Nitsche, Lyzzaik,  NitscheConjecture}

Apropos, we would like also to draw the readers attention to the 3D-case.  Laugesen
constructed in \cite{Laugesen} a self-homeomorphism of the sphere whose
harmonic extension to the ball in $\R^3$ fails to be injective.  However,  this construction is impossible for homeomorphisms which are gradients  of a scalar harmonic function. Such a harmonic map is automatically a diffeomorphism~\cite{GleasonWolff, Lewy}. We, however, do not enter to the higher dimensional case here.

\section{Complex Capacitors}  In analogous setting the term complex capacitor (or condenser)  refers to a device that stores energy of the differential of a complex-valued potential function, which is typically specified on the boundary components. Another way to interpret complex capacitors is via mathematical models of hyperelasticity \cite{Antman, Ball1, Ball2, Ball3, Ball4, Ciarlet}. In these models the  harmonic mapping $\, H\,: \Delta \rightarrow \mathbb C\,$ is understood  as a hyperelastic deformation of a plate.  Subsequently, the so-called star-shape of the deformed boundary $\, H(\partial\Delta)\,$ will be imposed and natural. Regrettably, even under this asssumption it is not generally possible to ensure injectivity of $\, H\,: \Delta \rightarrow \mathbb C\,$. We say,  an \textit{interpenetration of matter}  can occur. 

\subsection{Elecromagnetic capacitor} The complex potential  $\,H =  U +  i V\,$  might be visualized as that of an \textit{electromagnetic capacitor} which combines \textit{electrostatic charge}  $\, E \bydef \nabla U\,$ with  a magnetostatic field  $\, B \bydef \nabla V\,$.  Gauss's low reads as:
$$
\textnormal{div} E \,= \Delta U \equiv 0\;\;\; \textnormal{and}\;\; \textnormal{div} B \,= \Delta V \equiv 0\,.\
$$
The complex differential  $\, \textnormal {d} H \bydef\, H_z \,\textnormal{d} z \; +\;   H_{\overline{z}} \,\textnormal{d} \overline{z}  \,$ gives rise to the interaction of the electromagnetic field with the charged matter in $\,\Delta\,$,   in accordance with \textit{Lorentz force law}. Alexander Khitun's 
 experiments \cite{Khitun} reveal that "\textit{Incorporating  a magnetic field could help electric capacitors store more energy without breaking down}".    For relevant references see \cite{GM, JB, PAT}.



\subsection{Rad\'{o}-Kneser-Choquet Theorem (RKC)}

At this point it is relevant to invoke the classical Rad\'{o}-Kneser-Choquet theorem (RKC) \cite{Rad, Kn, Choquet, AS, IOrkc},  which asserts that \\

\textit{Harmonic extension of a homeomorphism $\, H : \partial \Omega \onto \partial \mathcal Q\,$  of the boundary of a Jordan domain  onto the boundary of a convex domain  takes $\,\Omega \,$  diffeomorphically onto $\,\mathcal Q\,$.}  \\

In a succinct way, the above hypothesis, on  the boundary map $\, H : \partial \Omega \onto \partial \mathcal Q\,$ to be a homeomorphism, forces that $\,\Omega\,$ must be a Jordan domain.  
 A generalization of RKC-theorem, where $\,\Omega\,$ is simply connected, requires additional  topological arguments.  In this case one cannot even speak of a homeomorphism $\, H : \partial \Omega \onto \partial \mathcal Q\,$. Nevertheless  (RKC)-theorem can be generalized if the  boundary map is understood as a monotone map - the concept offered by C.B. Morrey  in 1935  \cite{Morrey}. 
\begin{definition}
 A continuous map $\, H \,:\, \mathbb X  \onto \mathbb Y\,$ between compact topological spaces is said to be monotone if for every $\,y_\circ \in \mathbb Y\,$ its preimage  $\, H^{-1} (y_\circ) \bydef \{ x \in \mathbb X\; :\; H(x)  = y_\circ \}\,$ is connected in $\,\mathbb X\,$. 
\end{definition}
Obviously, homeomorphisms are monotone mappings, see also Remark \ref{MonotonicityRemark} . 

A generalization of Rad\'{o}-Kneser-Choquet Theorem to simply connected domains is presented in \cite{IOsimplyconnectedRKC}. It  includes even $\,p\,$-harmonic deformations.

\begin{theorem} \label{pHarmonicRKC}
 Let $\,\Omega\,$ be a bounded simply connected domain in $\,\mathbb C\,$.
\begin{itemize}
\item   (Existence) Every continuous function $\,H : \partial \Omega \into \mathbb C\,$ admits  unique continuous extension, again denoted by $\, H : \Omega \into \mathbb C\,$, which is $\,p\,$-harmonic in $\,\Omega\,$ , $\,1 < p < \infty\,$.

\item  (Injectivity)  If, moreover, $\,H\,$ takes  $\,\partial \Omega\,$ monotonically onto  the boundary of a convex domain $\,\mathcal Q\,$  then its $\,p\,$-harmonic extension is a diffeomorphism $\, H : \Omega \onto \mathcal Q\,$ . In particular, $\,H\,$ admits no rank-one critical points (Jacobian does not vanish). 

\end{itemize}
\end{theorem}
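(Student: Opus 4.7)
I would first extend $H$ from $\partial\Omega$ to a continuous $H_0\colon\overline{\Omega}\to\mathbb{C}$ by Tietze's theorem, then solve the (coordinatewise) $p$-Laplace Dirichlet problem by minimising the decoupled energy
\[
\mathcal{E}_p[U,V]\,=\,\int_\Omega\bigl(|\nabla U|^p+|\nabla V|^p\bigr)\,\textnormal{d}z
\]
over $H_0+W^{1,p}_0(\Omega,\mathbb{C})$. Existence and uniqueness of the minimizer in $W^{1,p}$ are standard from strict convexity and coercivity, and each coordinate is a weak $p$-harmonic function. The delicate point is continuity up to $\partial\Omega$: here I would invoke the Wiener regularity criterion for the $p$-Laplacian. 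Since $\Omega$ is simply connected, $\mathbb{C}\setminus\Omega$ is a connected continuum, so the $p$-capacity of $B(\zeta,r)\setminus\Omega$ is bounded below uniformly in $r$ at every $\zeta\in\partial\Omega$, making every boundary point $p$-regular. Uniqueness of the continuous extension then follows from comparison.

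\textbf{Injectivity, the model case $p=2$.} To show that the extension is a diffeomorphism $H\colon\Omega\onto\mathcal{Q}$, I would run a Kneser--Choquet level-set argument. Suppose toward contradiction that $DH(z_0)$ has rank at most one at some $z_0\in\Omega$, and pick a real-linear functional $\ell\colon\mathbb{C}\to\mathbb{R}$ vanishing on the image of $DH(z_0)$, normalised so that $\ell(H(z_0))=0$. The scalar function $u\bydef\ell\circ H=aU+bV$ then has a critical point at $z_0$. Since $\mathcal{Q}$ is convex, the line $\{\ell=0\}$ meets $\partial\mathcal{Q}$ in at most two points (or in a chord), and by monotonicity of $H|_{\partial\Omega}\colon\partial\Omega\onto\partial\mathcal{Q}$ the preimage $u^{-1}(0)\cap\partial\Omega$ consists of at most two connected arcs. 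For $p=2$, $u$ is harmonic, so an interior critical point would force at least four arcs of $u^{-1}(0)$ to reach $\partial\Omega$, contradicting the two-arc bound; therefore $DH$ has full rank throughout $\Omega$. Global injectivity $\Omega\onto\mathcal{Q}$ then follows from a degree argument along a Jordan exhaustion $\Omega_n\nearrow\Omega$, with $H|_{\partial\Omega_n}$ having winding number one around any interior point of $\mathcal{Q}$.

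\textbf{The main obstacle and the general $p$.} For $p\neq 2$, the composition $u=aU+bV$ is in general \emph{not} $p$-harmonic, since sums of $p$-harmonic functions are not $p$-harmonic; bypassing this is the core technical difficulty. I would invoke the Alessandrini--Magnanini theory for level sets of planar solutions of quasilinear divergence-form equations: critical points of a scalar $p$-harmonic function are isolated, carry a finite negative topological index, and the level curves branch locally as they do for holomorphic functions. Applied separately to $U$ and $V$ — whose boundary traces inherit a single-maximum/single-minimum structure from the hypothesis that $H|_{\partial\Omega}$ is monotone onto a convex curve — the same branching-count contradiction rules out rank-one interior critical points of $H$. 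The final degree argument is identical to that of the $p=2$ case, yielding the desired diffeomorphism $\Omega\onto\mathcal{Q}$ in full generality.
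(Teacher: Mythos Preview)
The paper does not prove this theorem; it is quoted from \cite{IOsimplyconnectedRKC} as background. So there is no ``paper's own proof'' to compare against here. That said, your sketch is worth assessing on its merits, and there is a real gap in the $p\neq 2$ part.

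Your existence argument and your $p=2$ injectivity argument are essentially the classical ones and are fine as outlines (modulo some care with non-Jordan boundaries, which is exactly the point of the cited reference). The problem is your handling of general $p$. You correctly observe that $u=aU+bV$ is not $p$-harmonic when $p\neq 2$, so the Choquet linear-functional trick is unavailable. But your proposed workaround --- apply the Alessandrini--Magnanini level-set theory \emph{separately} to $U$ and to $V$ --- only shows that $\nabla U$ and $\nabla V$ are individually nonvanishing in $\Omega$. That rules out rank-zero critical points. It does \emph{not} rule out rank-one critical points, where $\nabla U(z_0)$ and $\nabla V(z_0)$ are both nonzero but parallel, so that $J_H(z_0)=0$. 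Nothing in a coordinate-by-coordinate argument detects this parallelism, and the ``single-maximum/single-minimum'' structure of $U|_{\partial\Omega}$ and $V|_{\partial\Omega}$ gives you no leverage on it. The statement you are trying to prove is precisely that $J_H\neq 0$, so this is the heart of the matter, not a detail.

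The actual proof in \cite{IOsimplyconnectedRKC} does not decouple the coordinates. What makes the $p$-harmonic case go through is a structural fact specific to dimension two: the complex gradient map associated to a planar $p$-harmonic function is $K$-quasiregular (cf.\ \cite{BI, IM, AL} and the use made of this in Section~\ref{Static-p-harmonicField} of the present paper), so its zeros are isolated with controlled local topological index, and the level-set/dendrite analysis near a critical point yields the same ``at least four branches'' count as in the harmonic case. The point is that this branching structure survives for the \emph{scalar} function $\ell\circ H$ along the relevant direction because one can work with the coupled system rather than with $U$ and $V$ in isolation; your sketch does not supply any mechanism of this kind.
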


Loosely associated with the present work are various, far reaching, applications of Theorem \ref{pHarmonicRKC}. The one \cite{IKOdiff} deserves special mentioning.
\subsection{Comments on Dirichlet Energy}

In the hyper-elasticity theory, full  differential $\, \textnormal d H  \bydef  H_z \,\textnormal d  z  \; +  \; H_{\overline{z} }\,\textnormal d \overline{z}\,$ of a complex harmonic function  is referred to as the \textit{deformation gradient}. In this context, it is customary to consider the Dirichlet type integral
\begin{equation} \label{DirichletEnergy}
\mathscr D[H] \bydef  \int_{\Omega\setminus\digamma}  \left (\,|H_z(z)|^2 \,+\, | H_{\overline{z}}(z) |^2\right) \; \textnormal dz
\end{equation}
as  the  \textit{stored energy} of the hyperelastic capacitor. However, minimization of the energy is not always possible under arbitrary boundary conditions. Some boundary data  need not even admit any extension of finite energy. Even if they do admit, the energy-minimal mappings may not necessarily be harmonic , see \textit{Nitsche phenomenon on annuli}\, in Section 21.3 of  \cite{AIMb} and the definite answer in \cite{NitscheConjecture}. \\
That being said,  our questions are concerned 
with harmonic mappings having not only finite but also infinite Dirichlet energy.  The interested reader is referred to \cite{DurenBook, DurenHengartner}   for an exhibition of harmonic mappings in the plane\\

\subsection{Two types of critical points}
We find it useful  to distinguish between two types of critical points of a  $\,\mathscr C^1\,$-mapping $\,H\,: \Omega \setminus \digamma  \rightarrow \,\mathbb C\,$. These are the points where the rank of the differential  $\, \textnormal d H  \bydef  H_z \,\textnormal d  z  \; +  \; H_{\overline{z} }\,\textnormal d \overline{z}\,$    is not maximal,  thus equal to one or zero.  The image of a critical point under such $\,H\,$ is called \textit{critical value}. By Sard's theorem the set of critical values has measure zero.
 \begin{definition} Let $\,H \in \mathscr C^1(\Omega, \mathbb C)\,$.
 \begin{itemize}
 \item A point $\,a\in \Omega\,$ at which the rank of the differential $\, \textnormal d H (z) \,$ equals   1 will be called \textit{rank-one critical point}.
     \item A critical point $\,a\in \Omega\,$, at which the differential  $\, \textnormal d H(z) \,$ vanishes,  will be referred to as \textit{rank-zero critical point}.
 \end{itemize}
 \end{definition}
At a rank-zero critical point we have $\,|\nabla H |^2  \bydef |H_z|^2  + |H_{\overline{z}}|^2  = 0 \,$,  whereas at a rank-one  critical point we have only the Jacobian determinant $\, J_H  \,=\,  |H_z|^2  - |H_{\overline{z}}|^2 \,$ vanishing.

\section {An Example of Rank-one Critical Points} \label{Example}

In this example we are considering  the  punctured  plane $\,\mathbb C_\circ  \bydef \{ z \in \mathbb C ; \; z\not = 0\,\}\,$ and a  harmonic  mapping $ \,H  \bydef \mathbb C_\circ \into \mathbb C\,$ defined by :

\begin{equation} \label{Example}
H = H(z) \bydef  - \lambda \,\log\,|z|^2 \;+ \; z - \frac{1}{\overline{z}} \;,  \;\textnormal{where}\; \,\lambda \bydef  \frac{1}{2} \left( a + \frac{1}{a}\right)\,> 1\,,
\;\; \end{equation}
for some  parameter $\, a > 1\,$ which turns out to be one of the  critical points of $\,H\,$. 
\begin{remark} Let us publicize that it is possible to  construct similar examples for $\,p\,$-harmonic potentials, but here we stick to the case $\, p = 2\,$,  for simplicity.
\end{remark}

Let us take a look at the family of concentric circles $\,\mathcal C_r \bydef \{ z ; |z| = r \,\}\;, r>0\,$. 
Then $\,H\,$  restricted to any of such  circles  is a similarity transformation, 
$$
H(r e^{i \theta}) =  c \,+ \left( r  -\frac{1}{r} \right)  e ^{i \theta}\;,\;\;\textnormal{where} \,\; c \bydef  -  2 \lambda \log\,r\;
$$
In particular,  the  image of $\,\mathcal C_r\,$ under $\,H\,$ is again a circle, centered at $\,c\,$ and with radius $\,\rho \bydef \left|\, r -\frac{1}{r}\,\right| \,$
\begin{equation} \label{Circles}
H(\mathcal C_r) \; \bydef\; \mathcal C_\rho(c) =  \{ w : |w - c | = \rho\}\;
\end{equation} 
There is one exception to this statement; precisely, when the unit circle $\,\mathcal C_1\,$ degenerates  to a puncture at $\,\{0\}\,$, in which case,
\begin{equation}
H(e^{ i \theta}) \, \equiv 0 \;\;
\end{equation}

  It has to be pointed out that the circles $\,\mathcal C_\rho(c)\,$ at (\ref{Circles})  need not be mutually disjoint.\\
 A short computation of complex partial derivatives shows that :
$$ H_z =  1 - \frac{\lambda}{z}\;\;\;\;\textnormal{and}\;\;\;\; H_{\overline{z}} = \frac{1}{\overline{z}^2}  \;-\frac{\lambda}{\overline{z}} $$

\begin{equation}
| H_z|^2 =  1\,-\, \frac{2\lambda}{|z|^2}\, \Re  \, z  \,+\, \frac{\lambda^2}{|z|^2}
\end{equation}

$$
|H_{\overline{z}}|^2 = \, \frac{1}{|z|^4} \,- \frac{2\lambda}{|z|^4}\, \Re  \, z \;+\;\frac{\lambda^2}{|z|^2}
$$
It then follows that the differential does not vanish. Indeed, we have
$$
|z|^2 | H_z|^2 \,+\,|z|^4 |H_{\overline{z}}|^2  = (1 - \lambda )^2 (1 + |z|^2) \; + 2 \lambda \,|1 - z |^2\; > 1
$$
where the strict inequality $\,  > 1\,$ can easily be seen.
Concerning the Jacobian determinant, we compute
$$
|z|^4  J_H(z)  = |z|^4 \left(\,|H_z|^2\,-\, |H_{\overline{z}}|^2    \right)  =  \left(\,|z|^2 - 1  \right) \big[ | z - \lambda |^2  \,-\, ( \lambda ^2 - 1 ) \;    \big]
$$
and recall that $\,\lambda = \frac{1}{2}\left( a +  1/a  \right)\,$, so   $\, |a - \lambda |^2 =  ( \lambda ^2 - 1 ) \, = \left[\frac{1}{2}\left( a - \frac{1}{a} \right)\right]^2\, \bydef \rho^2\,.\,$
Therefore $\,J_H(a) = 0\,$. On the unit circle of $\, |z| = 1\,$, we also have $\, J_H(z)  =  0\,$. There are two remaining locations of the critical points to consider. \\
\textbf{Case 1} (outside the unit circle,  $\, |z| > 1\,$)\\
The Jacobian determinant is positive if and only if $\, |\, z - \lambda\,| > \rho\,$. This is the region outside the circle $\, \mathcal C_\rho (\lambda) \,$ centered at $\,\lambda = \frac{1}{2}\left( a + \frac{1}{a} \right) \,$ and with radius $\, \rho = \frac{1}{2} \left( a - \frac{1}{a} \right)\,$. \\
\textbf{Case 2} (inside the unit circle, $\,0 < |z| < 1\,$)\\
The Jacobian determinant is positive if and only if $\, |\, z - \lambda\,| < \rho\,$.\\
Figure \ref{ExampleCriticalPoints} below illustrates those cases.
\begin{center}
\begin{figure}[h] 
 \includegraphics[angle=0, width=0.90 \textwidth]{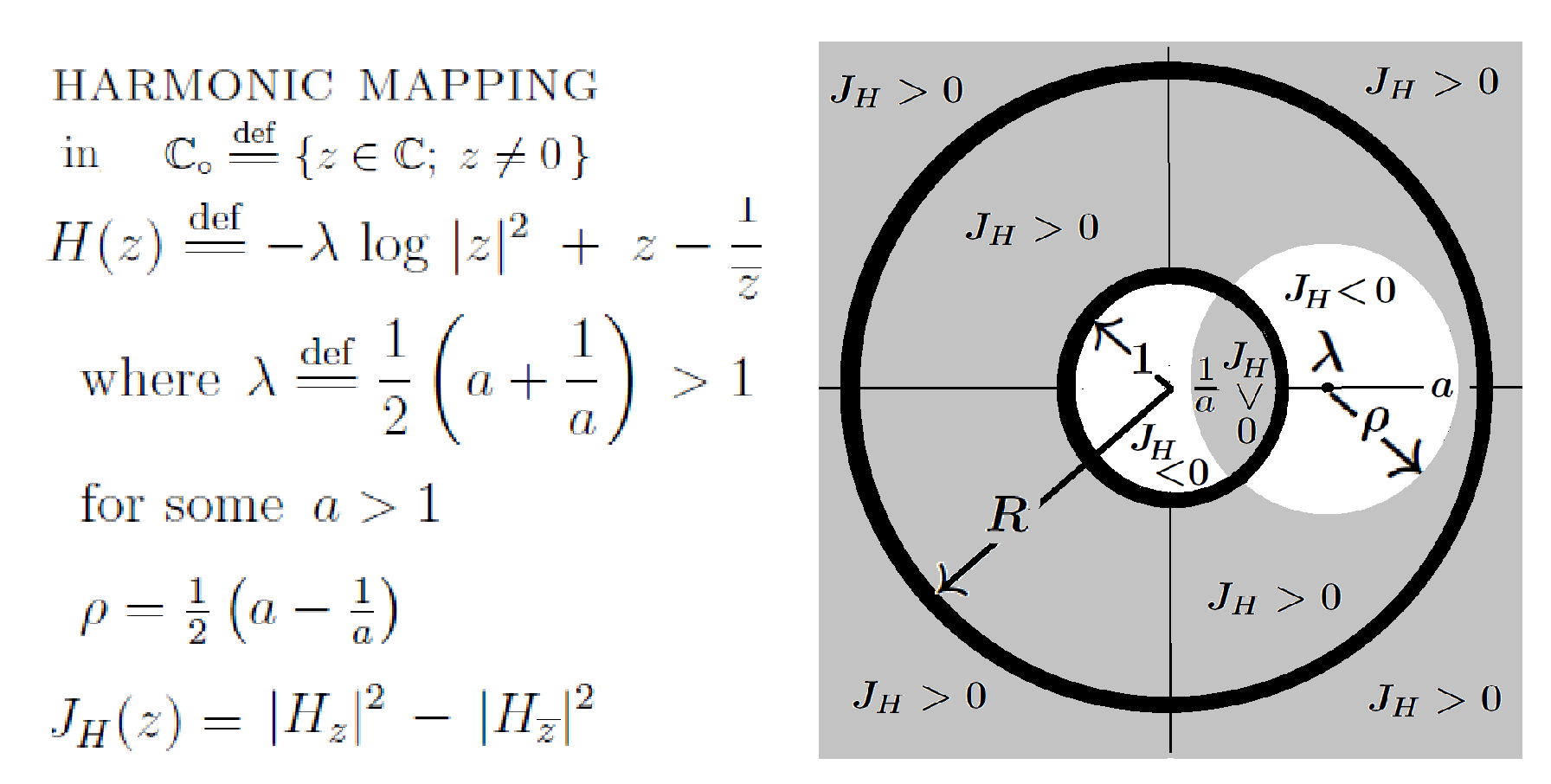}
\caption{  Critical points of $\,H\,$, where the Jacobian determinant vanishes,  lie along the circles $\, |z| = 1\,$ and $\,|z - \lambda| = \rho\,$. However, the rank-zero critical points, where the differential matrix vanishes, cannot occur.} \label{ExampleCriticalPoints}
\end{figure}
\end{center}
\newpage 
\section{Starlike Continua and Monotonicity}

\begin{definition}\label{Starshape}
A starlike continuum $\,\mathfrak G \subset \mathfrak R^2\,$  (in the target plane $\,\mathfrak R^2\,$) is a compact set whose intersection with every straight line passing through a given point $\,\mathfrak s\,$, called a center of $\,\mathfrak G\,$,  consists of at most  two connected components (of course, these components can be: either empty set, single points or closed line segments).    
\end{definition}
\begin{center}
\begin{figure}[h] 
 \includegraphics[angle=0, width=0.90 \textwidth]{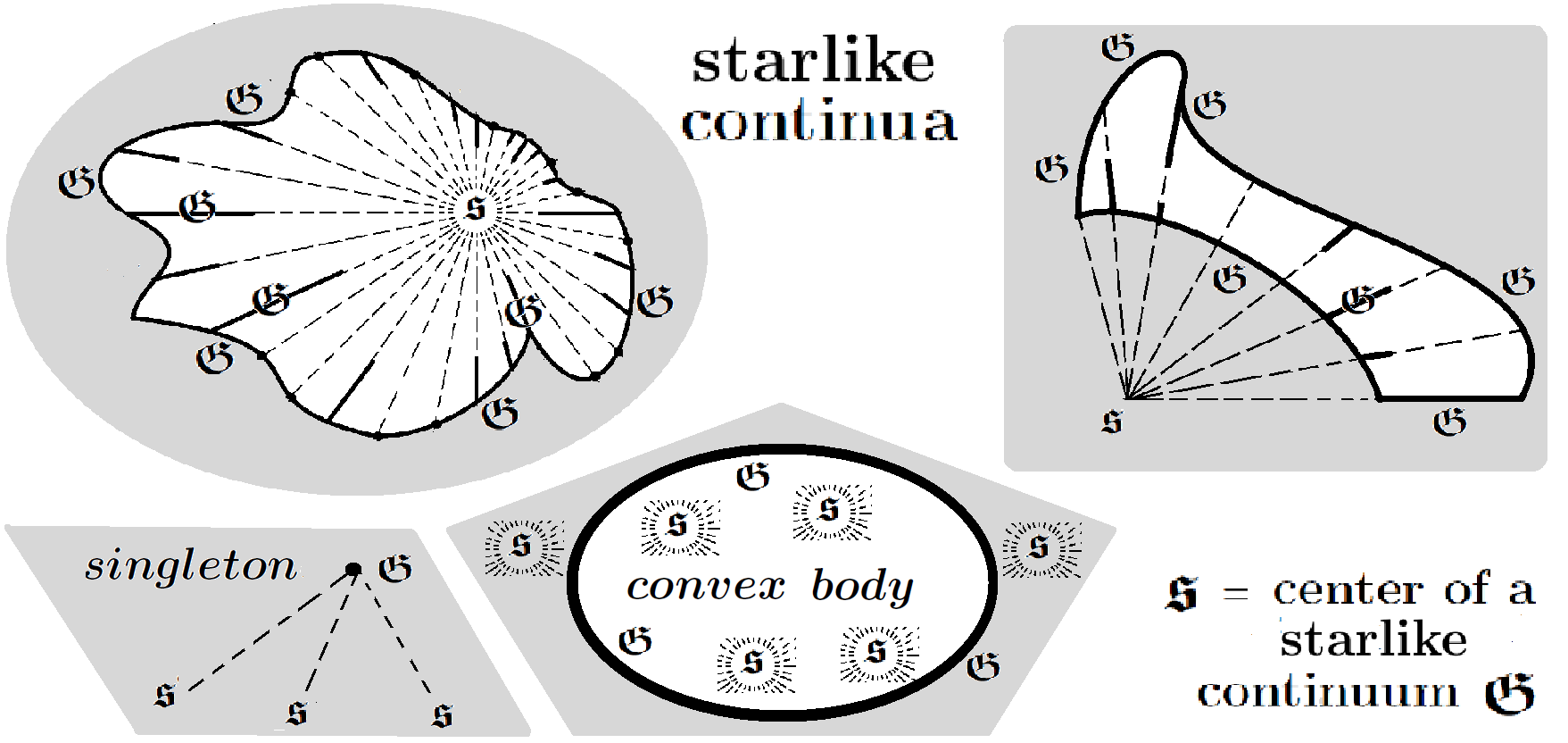}
\caption{  Starlike continum  $\,\mathfrak G\,$ and its possible centers   $\,\mathfrak s . $ } The above starlike continua  are monotone images of  $\,\partial \Omega.$ \label{ExampleStarlikeContinua}
\end{figure}
\end{center}

\begin{remark} \label{MonotonicityRemark}
Monotonicity property of $\,H : \partial \Omega  \onto \mathfrak G\,$  tells us that for every singleton $\, \{\mathfrak c\} \subset  \mathfrak G\,$ its preimage $\,H^{-1}\{\mathfrak c\} \bydef \{ z \in \partial \Omega\; ;\; H(z) = \mathfrak c\,\} $ is connected. Actually, this property implies that the  preimage of any connected set  in $\,\mathfrak G\,$ is connected  in $\,\partial \Omega\,$  as well, see G.T. Whyburn \cite{Whyburn} \,and surveys by L. F. McAuley \cite{Mc, McAuley} .
Concerning monotone Sobolev mappings, we mention \cite{IOmono, IOsimplyconnectedRKC, IOLimitsSobolevHom}  and references therein for further reading.\\
No straight line segment, although still a starlike continuum, can be realized  as a monotone image of $\,\partial \Omega\,$, see Lemma \ref{NoLineSegment} below.
\end{remark} 
\section{Starlike-shaping  of a complex capacitor}
To formulate our main result in the greatest possible generality,  we need to introduce one more definition. As before, $\,\Omega\,$ is a bounded simply connected domain and $\;\digamma  \Subset \Omega\,$  a continuum (not  a single point) that does not disconnect $\,\mathbb C\,$. Thus  $\,\digamma\,$  
 is the bounded component of $\,\mathbb C \setminus \Omega\,$.  
\begin{definition} [Starlike-shaping]  \label{starcondenser}$\;$ \\
 The term\textit{ starlike-shaping of a capacitor}  $\,( \Omega, \digamma)\,$ refers to a  non-constant continuous function $\, H :  \mathbb C \rightarrow \mathbb C\,$  such that
\begin{itemize}
\item $\,H\,$ is harmonic in  $\, \Delta \bydef\; \Omega\setminus \digamma\,$
\item $\,H\,$ is constant on $\,\digamma\,$, say $\,H(\digamma) \,=\, \{\mathfrak s\}$
\item $\,H(\partial \Omega)  \;\bydef\; \mathbf{\mathfrak G }\;$ is a starlike continuum of center at $\, \mathfrak s\,$.
\item The boundary map $\,H : \partial \Omega  \onto \mathfrak G\,$ is monotone.
\end{itemize}
\end{definition}

\section{Main Result}\label{sec:mainresult}

With reference to Definition  \ref{starcondenser}, our main resultt reads as follows. 

\begin{theorem}\label{MainTheorem}
The differential matrix of a starlike-shaping  complex potential $\, H :  \Delta \rightarrow \mathbb C\,$ does not vanish at any point. However, its Jacobian determinant may turn into zero at some points,  see (\ref{Example})\;.
\end{theorem}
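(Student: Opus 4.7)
\emph{Plan.} I would argue by contradiction: suppose $\textnormal{d}H(z_0) = 0$ at some $z_0 \in \Delta$, i.e.\ $H_z(z_0) = H_{\bar z}(z_0) = 0$. The strategy mimics Kneser's pencil proof of the classical RKC theorem, but with the support lines of a convex target replaced by lines through the star-center $\mathfrak s$, and convexity replaced by the starlike-plus-monotone hypothesis. For each $\theta \in \mathbb R$ I introduce the real harmonic pencil
\[
u_\theta(z) \;=\; \mathrm{Re}\bigl[\,e^{-i\theta}(H(z)-\mathfrak s)\,\bigr], \qquad z \in \overline{\Delta}.
\]
Each $u_\theta$ vanishes on $\digamma$ and, because $H_z(z_0)=H_{\bar z}(z_0)=0$, satisfies $\nabla u_\theta(z_0)=0$ for every $\theta$. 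I then pick a $\theta$ for which also $u_\theta(z_0) = 0$ (possible for any $z_0$: unique mod $\pi$ when $H(z_0) \ne \mathfrak s$, arbitrary otherwise). This $u_\theta$ cannot be identically zero, for otherwise $H(\Delta)$ and hence $\mathfrak G = H(\partial\Omega)$ would lie on the single line $\mathfrak s + i e^{i\theta}\mathbb R$, making $\mathfrak G$ a straight line segment — excluded by the forthcoming lemma that a line segment is never a monotone image of $\partial\Omega$.

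\emph{Local branching versus boundary accounting.} With $u_\theta(z_0) = 0$ and $\nabla u_\theta(z_0) = 0$, harmonicity forces the local expansion $u_\theta(z) = \mathrm{Re}[c(z-z_0)^m] + O(|z-z_0|^{m+1})$ with $m \ge 2$. Hence the nodal set $Z := u_\theta^{-1}(0)$ has at $z_0$ a bouquet of $2m \ge 4$ analytic arcs, separating $2m$ sectors in which the sign of $u_\theta$ alternates. On the boundary side, I claim $Z \cap \partial\Delta$ has at most three connected components: $\digamma$ itself gives one, and on $\partial\Omega$ the zero set is $(H|_{\partial\Omega})^{-1}(\mathfrak G \cap L_\theta)$, where $L_\theta := \mathfrak s + i e^{i\theta}\mathbb R$ is a line through the star-center; Definition~\ref{Starshape} bounds $\mathfrak G \cap L_\theta$ by two connected components, and monotonicity of $H|_{\partial\Omega}$ together with Whyburn's theorem (cf.~Remark~\ref{MonotonicityRemark}) propagates connectedness back to the preimages.

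\emph{Pigeonhole plus maximum principle.} No arc of $Z$ in $\Delta$ can form a closed loop, for such a loop would surround a subdomain on whose boundary $u_\theta \equiv 0$, giving $u_\theta \equiv 0$ inside by the maximum principle and then $u_\theta \equiv 0$ on $\Delta$ by real-analytic continuation — against non-triviality. So each of the $2m \ge 4$ arms issuing from $z_0$ must eventually reach $\partial\Delta$. Since only three components of $Z \cap \partial\Delta$ are available, two distinct arms $\gamma_1, \gamma_2$ land on a common component $B$; concatenating $\gamma_1$, a path in $B$ between the landing points, and $\gamma_2^{-1}$ produces a loop in $\overline{\Delta}$ along which $u_\theta \equiv 0$. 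This loop bounds a subdomain $R \subset \Delta$, and the maximum principle applied on $R$ once again forces $u_\theta \equiv 0$ on $\Delta$ — the desired contradiction. The statement about the Jacobian is then merely a reminder that the example of Section~\ref{Example} realizes rank-one critical points with nonvanishing differential.

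\emph{Main obstacle.} The technically delicate part is the topology in the last paragraph: rigorously tracing an analytic arm of $Z$ from an interior critical point past any further critical points it may hit, and showing that two such arms landing in a common boundary component $B$ cobound a genuine subregion of $\Delta$. For an arbitrary simply connected $\Omega$ the boundary $\partial\Omega$ need not be a Jordan curve (slit-domain type behaviour is allowed) — this is already the technical heart of the simply connected RKC of \cite{IOsimplyconnectedRKC}. The expected remedy is to pass to the Carath\'eodory prime-end compactification of $\Delta$, where $\partial\Delta$ is tamed into two topological circles, and the starlike/monotone data transport cleanly, so that the pigeonhole and Jordan-curve steps above become fully rigorous.
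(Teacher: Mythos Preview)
Your proposal is correct and is essentially the paper's own argument: your pencil $u_\theta = \mathrm{Re}[e^{-i\theta}(H-\mathfrak s)]$ is exactly the paper's $W=\alpha U+\beta V$, the choice of $\theta$ matches the paper's choice of $(\alpha,\beta)$ orthogonal to $H(a)-\mathfrak s$, the starlike-plus-monotone bound on $Z\cap\partial\Omega$ and the no-line-segment lemma appear verbatim, and the branching/pigeonhole/maximum-principle endgame is the same. The only difference is in the topological closing step you flag as the ``main obstacle'': rather than prime ends, the paper first proves that the $\ge 4$ arms issuing from $a$ have \emph{pairwise disjoint} limit continua in $\partial\Delta$ (so at least two of them must land in $\partial\digamma$), and then applies Janiszewski's theorem to the two continua $\Gamma_\mu=\overline{\mathscr P_\mu}\cup\digamma$ and $\Gamma_\nu=\overline{\mathscr P_\nu}\cup\digamma$ to produce the bounded subdomain $\Delta^0\subset\Delta$ on which $W\equiv 0$---this sidesteps any path-connectedness or prime-end machinery.
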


That this  implies Theorem \ref {pharmonicGradient} for $\,p = 2\,$,  is the content of the following

\begin{corollary} \label{scalarcase}
Let the  harmonic potential $\,H \,$ of a  capacitor be determined by two real constant boundary values, say $\,H\{\digamma\} = \{0\}\,$  and $\,H(\partial \Omega)  = \{1\}\,$. Then $\,H\,$ is a real-valued harmonic function in $\,\Omega\setminus \digamma\,$. Consequently, its gradient coincides with the  differential matrix and, by Theorem \ref{MainTheorem},  does not vanish at any point.
\end{corollary}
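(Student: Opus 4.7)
The plan is to reduce to Theorem~\ref{MainTheorem} in three short steps. First, I would show that $H$ is in fact real-valued. The imaginary part $\im H$ is harmonic on the doubly connected domain $\Delta$, and it vanishes on both boundary components $\partial\digamma$ and $\partial\Omega$; the maximum principle then forces $\im H \equiv 0$ on $\Delta$.

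Next, for a real-valued harmonic $H$ one has $H_{\bar z} = \overline{H_z}$, so
\[
\abs{H_z}^2 + \abs{H_{\bar z}}^2 \;=\; 2\abs{H_z}^2 \;=\; \tfrac{1}{2}\abs{\nabla H}^2.
\]
Thus the full complex differential $\dtext H = H_z\,\dtext z + H_{\bar z}\,\dtext\bar z$ vanishes at a point exactly when the classical gradient $\nabla H$ vanishes there. It now suffices to apply Theorem~\ref{MainTheorem} after checking that $H$, extended continuously by the value $0$ on $\digamma$ and by the value $1$ on $\mathbb C\setminus\Omega$, is a starlike-shaping of the capacitor $(\Omega,\digamma)$ in the sense of Definition~\ref{starcondenser}: the extended $H$ is a non-constant continuous function on $\mathbb C$, harmonic on $\Delta$, constant (equal to $\mathfrak s \bydef 0$) on $\digamma$, and its boundary trace on $\partial\Omega$ is the constant~$1$.

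The one point that merits a moment's care is that the target $\mathfrak G \bydef H(\partial\Omega) = \{1\}$ is a singleton. Such a one-point set is, by Definition~\ref{Starshape}, trivially a starlike continuum — every straight line through $0$ meets it in at most one point — and hence $\mathfrak s = 0$ is a legitimate center. The constant boundary map $\partial\Omega \onto \{1\}$ is monotone simply because $\partial\Omega$ is connected. This degenerate configuration is not in conflict with Lemma~\ref{NoLineSegment}, which addresses nondegenerate line segments. With the hypotheses of Theorem~\ref{MainTheorem} verified, we conclude $\dtext H \neq 0$ throughout $\Delta$, and by the identification in the display above this is equivalent to $\nabla H \neq 0$ on $\Delta$, as required.
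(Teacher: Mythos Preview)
Your proposal is correct and follows precisely the approach the paper intends: the Corollary is stated as an immediate consequence of Theorem~\ref{MainTheorem}, and you have carefully spelled out the two verifications the paper leaves implicit --- that the maximum principle forces $\im H\equiv 0$, and that for a real-valued $H$ the full differential and the classical gradient vanish together. Your check that the constant boundary map onto the singleton $\{1\}$ satisfies Definition~\ref{starcondenser} (starlike continuum with center $\mathfrak s=0$, monotone since $\partial\Omega$ is connected) is exactly what is needed to invoke Theorem~\ref{MainTheorem}.
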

\section{Harmonic Dendrite }

Our proof of  Theorem \ref{MainTheorem} is based on  analysis of  a scalar harmonic function   $\, W(z) \bydef \alpha\,U(z) \, + \,\beta\, V(z)\,$. This  linear combination of the real part $\,U\,$ and the imaginary part $\,V\,$  of the complex harmonic potential  $\, H = U + i V  : \, \Delta \bydef \Omega\setminus\digamma \into \mathbb C\,$ will be  specified later.  The right choice of the coefficients $\,\alpha, \beta\, ,\,  \alpha^2 + \beta^2 \not= 0\,$ is crucial. We shall examine a connected component of the level set $\, \{z \in \Delta\;:\;   W(z) = \textnormal{constant} \,\}\,$, where the constant will also be chosen  accordingly.     
We follow the ideas in \cite{IOsimplyconnectedRKC}; particularly, from Section 7 therein.  These ideas, the results, and comments are worth the efforts of discussing them further. \\

Let us temporarily consider  an arbitrary nonconstant harmonic function $\, W : \Delta \,\into \mathbb R\,$ .   Since the complex gradient $\, W_z = \partial W/ \partial z\,$ is holomorphic, the set of critical points $\,\mathbf C \bydef\{ z ;  \nabla W(z) = 0\}\,= \{ z_1, z_2, ..., ...\}$ is discrete. Outside $\,\mathbf C\,$   each level set $\,\mathfrak L \bydef \{ z \in \Delta\,;  W(z) = \textnormal{constant} \,\}\,$  consists of mutually disjoint $\,\mathscr C^\infty\,$-smooth open arcs  whose endpoints are the critical points. However, some  arcs  may not have endpoints; instead, they may terminate with so-called \textit{limit sets}. Those limit sets are continua in  $\,\partial \Delta\,$, see Lemma  \ref{DisjointLimitSets} and Figure \ref{CriticalDendrite} .

\subsection{Local structure of a harmonic level set}\label{LocalStructure}  We appeal to Corollary 10 in \cite{IOsimplyconnectedRKC} from which we  extrapolate the following:
\begin{itemize}
\item  Consider a harmonic level set $\,\mathfrak L\,$ passing through a point $\, a \in \Delta\,$. Then near this point $\,\mathfrak L\,$ consists of  $\,2m\,$ Jordan sub-arcs emanating from $\,a\,$ in the directions that (upon a suitable rotation) are just the unit complex numbers $\, \textnormal {Exp} \left(\frac{2k -1} { \,2m\,}  \pi\,i  \right)\;,\; k = 1,2, ... , 2m   \,$.
    \item Here $\, m = n+1\,$ where $\,n \geqslant 0 \,$ stands for the order of zero of the holomorphic function $\, W_z\,$ at $\,a\,$. In particular, we have $\, 2m = 2\,$ if $\,a\,$ is a regular point and $\, 2m \geqslant 4\,$  if $\,a\,$ is a critical point of $\,W\,$.
\end{itemize} 

\begin{corollary}{$\,\mathfrak L\,$ is locally path connected}
\end{corollary}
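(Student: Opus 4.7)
The plan is to deduce local path-connectedness directly from the local structure theorem just quoted from \cite{IOsimplyconnectedRKC}. Fix any point $a \in \mathfrak{L}$ and any relatively open neighborhood $U$ of $a$ in $\mathfrak{L}$. I need to produce a path-connected neighborhood $V$ of $a$ in $\mathfrak{L}$ contained in $U$.

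First, I invoke the local description: there exists an open disk $B(a,r) \subset \Delta$ (with $r$ depending on $a$) in which $\mathfrak{L}$ consists of exactly $2m$ Jordan sub-arcs $\gamma_1, \dots, \gamma_{2m}$, each with one endpoint at $a$ and emanating in the distinct directions $\mathrm{Exp}\bigl(\tfrac{2k-1}{2m}\pi i\bigr)$ after a suitable rotation. Since these directions are distinct, one may shrink $r$ so that the arcs are pairwise disjoint except for their common endpoint $a$, and so that $B(a,r) \cap \mathfrak{L} \subset U$ (using that $U$ is open in $\mathfrak{L}$ and that $\mathfrak{L}$ has no other components accumulating at $a$ inside $B(a,r)$ by the local description).

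Next I set $V := B(a,r) \cap \mathfrak{L} = \gamma_1 \cup \cdots \cup \gamma_{2m}$. Each arc $\gamma_k$ is a Jordan arc containing $a$, hence path-connected, and the arcs all share the common point $a$. Therefore any two points $p, q \in V$ can be joined by concatenating the sub-arc of $\gamma_{k(p)}$ from $p$ to $a$ with the sub-arc of $\gamma_{k(q)}$ from $a$ to $q$. Thus $V$ is path-connected and, being an open subset of $\mathfrak{L}$ contained in $U$, supplies the required neighborhood.

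I do not anticipate a substantive obstacle here: the statement is essentially a bookkeeping corollary of the enumeration in Subsection~\ref{LocalStructure}. The one subtlety worth noting is the disjointness of the arcs away from $a$, which requires shrinking $r$; but this is automatic because the $2m$ emanation directions are distinct and the arcs are $\mathscr{C}^\infty$-smooth at regular points and analytically controlled at critical points (zeros of the holomorphic function $W_z$), so no pair of them can re-intersect inside a sufficiently small disk.
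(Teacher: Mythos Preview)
Your proposal is correct and takes essentially the same approach as the paper: the paper's proof is the single sentence ``This conclusion is immediate from the local structure of $\mathfrak L$,'' and you have simply written out what ``immediate'' means---that a small disk meets $\mathfrak L$ in finitely many Jordan arcs sharing the point $a$, whose union is therefore path-connected.
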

\begin{proof}This conclusion is immediate from the  local structure of $\,\mathfrak L\,$. \end{proof}
It does not mean, however,  that $\,\mathfrak L\,$ is connected. For this reason we now restrict our discussion to a connected component of $\,\mathfrak L\,$ that contains a given point $\, a \in \Delta\,$  and  reserve the notation $\,\mathfrak L_a\,$ for the above  connected component; primarily,  $\,a\,$ will be a critical point of $\,W\,$.
 $\,\mathfrak L_a\,$ is the connected component of $\, \{z \in \Delta\;:\;   W(z) = W(a) \,\}\,$ that contains $\,a\,$.  Call it 
 a \textit{harmonic  dendrite} of $
\,W\,$ passing through its critical point $\,a\,$. 
\begin{center}
 \begin{figure}[h]
 \includegraphics[angle=0, width=0.79 \textwidth]{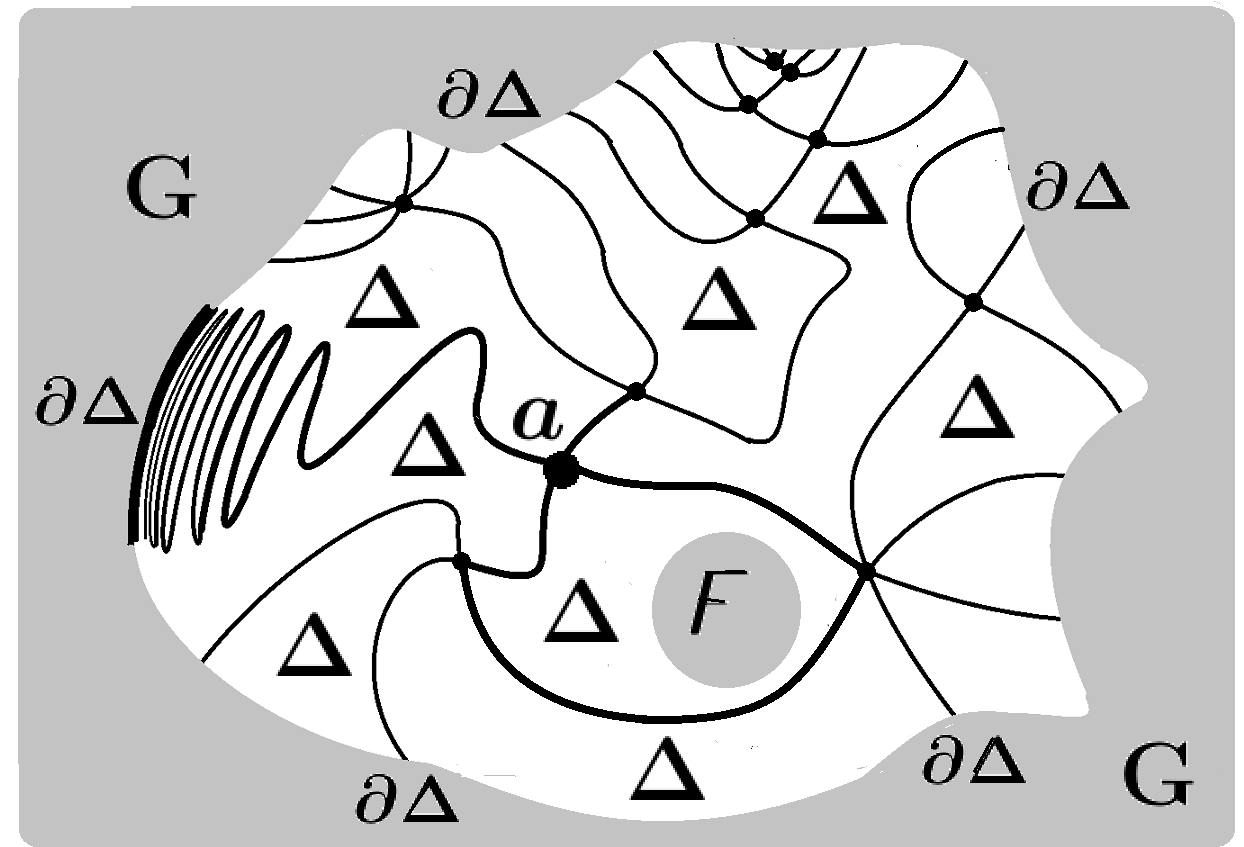}
\caption{ Harmonic dendrite $\,\mathfrak L_a\, \bydef \,\{ z \in \Delta \,;\, W(z) = W(a)\}\,$ passing through a critical point  $\, a \in \Delta = \Omega\setminus\digamma \,$. .}  \label{CriticalDendrite}
\end{figure}
\end{center}
\begin{lemma}
Any closed loop in $\,\mathfrak L_a\,$ must surround the hole $\,\digamma \Subset \Omega\,$, see Figure \ref{CriticalDendrite} .
\end{lemma}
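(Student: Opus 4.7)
I would argue by contradiction: suppose there is a closed loop $\gamma\subset\mathfrak{L}_a$ whose winding number around $\digamma$ vanishes (equivalently, around any point of $\digamma$, since $\digamma$ is connected and disjoint from $\gamma$). The goal is to produce a nonempty open subset of $\Delta$ on which $W\equiv W(a)$ and then invoke real-analyticity together with the standing hypothesis that $W$ is non-constant.

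\textbf{Step 1: reduction to a Jordan sub-curve.} The local description recalled in Subsection~\ref{LocalStructure} shows that $\mathfrak{L}_a$ is a locally finite one-dimensional graph in $\Delta$: smooth Jordan arcs meeting only at the isolated critical points of $W$. On such a graph, the standard cycle decomposition of a closed path, combined with additivity of the winding number under concatenation, supplies a simple closed sub-loop $J\subset\mathfrak{L}_a$ whose winding number about $\digamma$ is still zero. (If $\gamma$ merely retraces arcs and contains no Jordan sub-loop at all, then $\mathbb{C}\setminus|\gamma|$ has no bounded component, so there is nothing that $\gamma$ could surround and the claim is vacuous.)

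\textbf{Step 2: maximum and identity principles.} By the Jordan curve theorem $J$ bounds a Jordan domain $D\subset\mathbb{C}$. Since $\Omega$ is simply connected and $J\subset\Omega$, we have $D\subset\Omega$. The continuum $\digamma$ is connected, disjoint from $J$, and, by the vanishing of the winding number, not entirely contained in $D$; hence it lies entirely in the unbounded component of $\mathbb{C}\setminus J$, so $D\cap\digamma=\emptyset$ and $D\subset\Omega\setminus\digamma=\Delta$. On this disk $W$ is harmonic, continuous up to the boundary, and identically equal to $W(a)$ on $\partial D=J\subset\mathfrak{L}_a$. The maximum principle forces $W\equiv W(a)$ throughout $D$, and real-analyticity of harmonic functions on the connected open set $\Delta$ then forces $W\equiv W(a)$ on all of $\Delta$, contradicting the assumption that $W$ is non-constant.

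The genuinely delicate step is the graph-theoretic reduction carried out in Step~1; everything that follows is a short and standard maximum-principle argument. I expect that extraction of the simple sub-cycle $J$ (and verifying that the winding number is inherited) is the only place where the fine local structure of $\mathfrak{L}_a$ — as opposed to its mere connectedness — is really needed.
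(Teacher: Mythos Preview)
Your argument is correct and follows the same line as the paper's: if a loop in $\mathfrak{L}_a$ fails to surround $\digamma$, then it bounds a subdomain of $\Delta$ on whose boundary $W\equiv W(a)$, so the maximum principle and unique continuation force $W$ to be constant on $\Delta$, a contradiction. The paper's proof is just your Step~2, stated in three lines.

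The only difference is your Step~1. In the paper, ``closed loop'' already means a Jordan curve (cf.\ the terminology fixed later in Section~\ref{Static-p-harmonicField}: ``A Jordan curve (also called Jordan loop)\ldots''), so no cycle-extraction is needed, and the local graph structure of $\mathfrak{L}_a$ plays no role in this particular lemma. Your reduction is not wrong, but it is unnecessary here; and the point you single out as ``genuinely delicate'' --- inheriting a vanishing winding number when passing to a simple sub-cycle --- is in fact the one place where your write-up is a bit loose (sub-cycles of a null-homotopic cycle can individually have nonzero winding numbers that cancel). If you wanted to keep a general closed path $\gamma$, the cleaner route avoids Step~1 entirely: the bounded complementary components of $|\gamma|$ lie in $\Omega$ by simple connectivity, miss $\digamma$ by hypothesis, and have boundary in $|\gamma|\subset\mathfrak{L}_a$, so the maximum principle applies directly to each of them.
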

\begin{proof}
Because otherwise such a loop would constitute the boundary of a subdomain of $\,\Delta\,$ along which $\,W  = \textnormal{constant}\, = W(a)\,$. Consequently $\,W(z) = W(a) \,$ inside the loop. By the unique continuation property of harmonic functions, we would conclude that $\,W (z) \equiv W(a)\,$ in the entire domain $\,\Delta\,$; contrary to the assumption that $\, W \not = \textnormal{constant}\,$. 
\end{proof}
In much the same way we infer that the closure of $\,\mathfrak L_a\,$, which might include points in $\,\partial \Delta\,$, cannot contain the boundary of a subdomain of $\,\Delta\,$. Figure \ref{BranchesMeetingOnTheBoundary} illustrates such an impossible circumstance. In this illustration  we have a subdomain $\,\Delta^0 \subset \Delta\,$ (dark shaded sinusoidal region)  whose boundary is contained in  $\,\overline{\Delta}\,$. Recall that $\,W\,$ is assumed to be continuous up to $\,\overline{\Delta}\,$, so $\,W \equiv 0\,$ on   $\,\Delta^0\,$ and, therefore, on the entire region $\,\Delta\,$ as well; which is a clear  contradiction.

\begin{center}
 \begin{figure}[h]
 \includegraphics[angle=0, width=0.99 \textwidth]{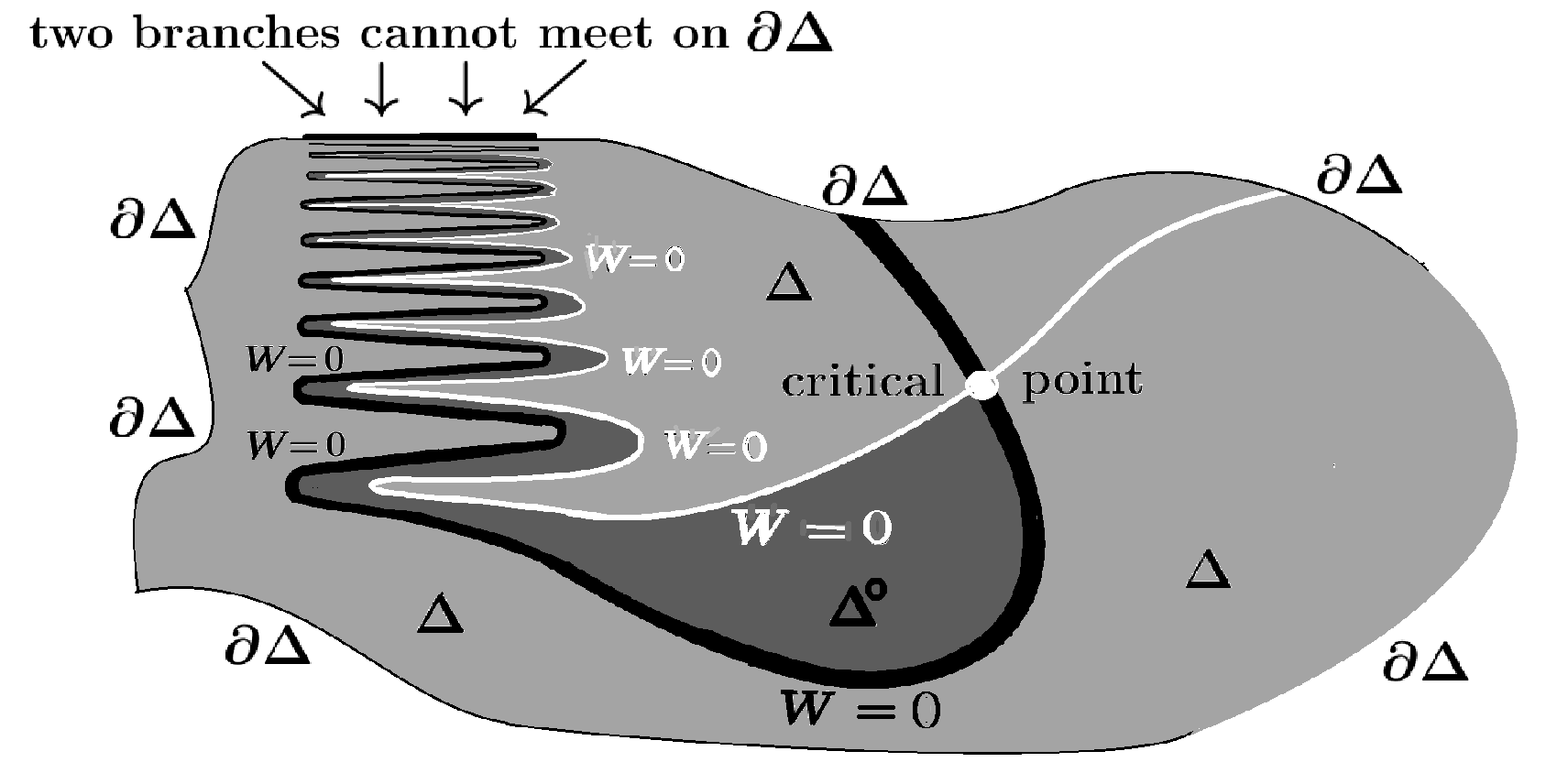}
\caption{ Two different branches of the harmonic dendrite $\,\{ z \in \Delta \,;\, W(z) = 0\}\,$ emanating from a critical point of $\,W\,$ never meet again, neither in $\,\Delta\,$ nor on $\,\partial \Delta\,$.  }\label{BranchesMeetingOnTheBoundary}
\end{figure}
\end{center}
From now on, while  loosing no generality, we assume (as in the above figure) that:
\begin{equation} \, W(a) = 0 \, \textnormal{\,and\,} \, W(\digamma)= \{0\,\}\,
\end{equation}

\begin{corollary} There are no closed loops in  $\,\mathfrak L_a\,$.   
\end{corollary}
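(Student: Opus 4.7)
The plan is to argue by contradiction, exploiting the previous lemma together with the maximum principle and unique continuation for the harmonic function $W$. Suppose, to the contrary, that $\mathfrak{L}_a$ contains a closed loop $\gamma$. By the immediately preceding lemma, $\gamma$ must surround the hole $\digamma$; that is, $\digamma$ lies in the bounded component of $\mathbb{C}\setminus\gamma$.

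Next I would isolate the annular region trapped between $\gamma$ and $\digamma$. Let $\mathcal{U}$ denote the bounded component of $\mathbb{C}\setminus\gamma$; since $\gamma\subset\Delta$ and $\digamma\Subset\Omega$ with $\digamma\subset\mathcal{U}$, the set $\mathcal{A}\bydef\mathcal{U}\setminus\digamma$ is a nonempty open subdomain of $\Delta$ whose topological boundary is contained in $\gamma\cup\partial\digamma$. On $\gamma$ we have $W\equiv 0$ by the very definition of $\mathfrak{L}_a$ (recall the normalization $W(a)=0$), and on $\partial\digamma$ we have $W\equiv 0$ because $W(\digamma)=\{0\}$. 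Thus $W=0$ on all of $\partial\mathcal{A}$.

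Since $W$ is harmonic in $\Delta\supset\mathcal{A}$ and continuous up to $\overline{\Delta}\supset\overline{\mathcal{A}}$, the maximum and minimum principles applied on the bounded domain $\mathcal{A}$ force $W\equiv 0$ throughout $\mathcal{A}$. Unique continuation for harmonic functions then propagates this identity from the open set $\mathcal{A}\subset\Delta$ to the entire connected domain $\Delta$, yielding $W\equiv 0$ on $\Delta$. This contradicts the standing assumption that $W$ is nonconstant, and the corollary follows.

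The only delicate point is verifying that $\mathcal{A}=\mathcal{U}\setminus\digamma$ really is an open subset of $\Delta$ with $\partial\mathcal{A}\subset\gamma\cup\partial\digamma$; once that topological step is in hand, the rest is standard. I do not expect any genuine obstacle here, as the previous lemma has already eliminated the alternative scenario (a loop not surrounding $\digamma$), and the analogous figure‑\ref{BranchesMeetingOnTheBoundary} discussion has already addressed the case where the loop touches $\partial\Delta$; the present argument is simply the cleanest version of that same maximum‑principle mechanism, carried out for honest loops in $\Delta$.
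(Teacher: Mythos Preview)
Your argument is correct and is essentially identical to the paper's own proof: both invoke the preceding lemma to reduce to the case of a loop $\gamma$ surrounding $\digamma$, form the doubly connected region between $\gamma$ and $\digamma$ (your $\mathcal{A}=\mathcal{U}\setminus\digamma$ is exactly the paper's $\Omega^0\setminus\digamma$), observe that $W$ vanishes on both boundary components, and then apply the maximum principle followed by unique continuation to force $W\equiv 0$ on $\Delta$. The only difference is cosmetic---you spell out the maximum/minimum principle and the topological verification of $\partial\mathcal{A}\subset\gamma\cup\partial\digamma$ a bit more explicitly than the paper does.
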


\begin{proof}
We need only rule out  the presence of closed loops  in  $\,\mathfrak L_a\,$  that surround $\,\digamma\,\subset \Omega$.
 Such a loop would constitute the boundary of a subdomain, say  $\,\Omega^0 \Subset \Omega\,$ and  $\,\digamma  \Subset \Omega^0 \Subset \Omega\,$.  This would give us a doubly connected region $\,\Omega^0 \setminus \digamma\,\subset \Delta\,$ and a harmonic function $\,W\,$ vanishing on both components of its boundary. Thus, as before,  $\,W \equiv 0\,$ on  $\,\Omega^0 \setminus \digamma\,$ and, by the unique continuation property, $\,W \equiv 0\,$ on $\,\Delta\,$.
\end{proof}

\subsection{Jordan paths $\,\mathscr P_1,\, \mathscr P_2, \mathscr P_3, \,\mathscr P_4, ...\,$, emanating from the critical point $\,a\,$} 
 We initiate our construction of the paths $\,\mathscr P_1,\, \mathscr P_2, \mathscr P_3\, ...\,$, from the critical point, denoted by $\, z_1 \bydef a\,$.  According to Subsection \ref{LocalStructure},  there emanate from $\, z_1\,$ at least four local arcs along the dendrite $\,\mathfrak L_a\,$, say  $\,\ell_1, \ell_2, \ell_3, \ell_4, ...\,, \ell_{2m}\,$\,, $\, m \geqslant 2\,$, for which  $\,z_1\,$ is their only common point (call it their left endpoint). 
Except for $\,z_1\,$,  these arcs  remain mutually disjoint, because of no presence of closed loops in $\,\mathfrak L_a\,$.  We prolong each arc $\,\ell_\nu\, ,\, \nu = 1, 2, ... , 2m\,$,   until we  reach another  critical point, say $\,z_1^\nu\,$    and view it as right endpoint of $\,\ell_\nu\,$ . It is possible, however, that no prolongation terminates at any critical point. In that case the arc must approach $\,\partial \Delta\,$, due to local structure of the dendrite. Next,  each new critical point  $\,z_1^1,\, z_1^2, ...\, z_1^{2m}\,$ plays  the role of the left endpoint of a new arc emanating from it.  For each $\, \nu = 1, 2, ..., 2m\,$ we choose and fix one of those arcs and denote it by  $\,\ell_1 ^{\,*},\,  \ell_2^{\,*}, ...\,, \ell_{2m}^{\,*}  \,$, respectively. As before, each of these new arcs prolongs untill it reaches a critical point or  approaches $\,\partial \Delta\,$.  Every such critical point becomes a left endpoints of a finite family of arcs emanating from it.  We choose one of them and denote the chosen arcs by  $\,\ell_1 ^{\,**},\,  \ell_2^{\,**}, ...\,, \ell_{2m}^{\,**}  \,$, respectively. Similarly,  we construct a third generation of arcs $\,\ell_1 ^{\,***},\,  \ell_2^{\,***}, ...\,, \ell_{2m}^{\,***}  \,$, and so on.  Continuing in this fashion (indefinitely if necessary) we obtain $\,2m\,$ of Jordan paths emanating from $\,z_1 = a\,$ and approaching the boundary of $\,\Delta\,$. In symbols, 
$$
\mathscr P_\nu  \bydef \ell_\nu ^{\,*} \cup \ell_\nu^{\,**} \cup \ell_\nu^{\,***} \,\cup\,...  
$$
 Precisely, $\,\emptyset \neq \overline{\mathscr P_\nu }\setminus \mathscr P_\nu  \subset \partial \Delta\,$, for $\,\nu = 1,2, ... , 2m\, , \, 2m \geqslant 4\,$.  Since the entire dendrite $\,\mathfrak L_a\,$ admits no closed loops, we have
$$
\overline{\mathscr P_\nu } \cap \overline{\mathscr P_\mu}\,=\, \{a\}\,\,\,\,\,,\,\, \textnormal{for all}\; 1 \leqslant \mu \,\neq\, \nu\, \leqslant 2m 
$$
Also, $\,\mathscr P_\nu\,$ does not intersect itself. 

\begin{lemma}\label{DisjointLimitSets}
The so-called limit-sets, or the right end-sets,
$$ \mathcal C_\nu   \bydef \overline{\mathscr P_\nu }\setminus \mathscr P_\nu  \subset \partial \Delta\,,\;   \textnormal{for} \; \nu = 1,2, ... , 2m\, ,\;\textnormal{where} \, \;2m \geqslant 4\,,$$
are mutually disjoint continua in $\,\partial \Delta\,$\;\textnormal{(not like in Figure \ref{BranchesMeetingOnTheBoundary})}.
\end{lemma}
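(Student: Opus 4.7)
The plan is to prove two things about each limit set $\mathcal C_\nu = \overline{\mathscr P_\nu} \setminus \mathscr P_\nu$: that it is a continuum lying in $\partial \Delta$, and that the continua $\mathcal C_\nu$ are pairwise disjoint.

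For the continuum property, I would parametrize $\mathscr P_\nu$ as a continuous injection $\gamma : [0, \infty) \to \Delta$ starting at $a$, by concatenating the arcs $\ell_\nu^{*}, \ell_\nu^{**}, \ldots$ of the construction. The preliminary step is to verify that $\gamma$ escapes every compact subset $K \subset \Delta$. Because $W_z$ is holomorphic, the critical set $\mathbf C$ is discrete in $\Delta$; hence $\{W=0\} \cap K$ is a finite real-analytic graph with only finitely many edges. Since $\gamma$ is injective and its image lies in $\{W=0\}$, the path can traverse only finitely many of those edges, and therefore exits $K$ for good. This escape property gives
\[
\mathcal C_\nu \;=\; \bigcap_{T \ge 0} \overline{\gamma([T,\infty))} \;\subset\; \partial\Delta,
\]
and $\mathcal C_\nu$ is then a continuum as a nested intersection of non-empty compact connected subsets of the compact Hausdorff space $\overline{\Delta}$.

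For the disjointness, suppose for contradiction that $\mu \ne \nu$ and $p \in \mathcal C_\mu \cap \mathcal C_\nu$. On the compact set
\[
\mathcal Z \;=\; \overline{\mathscr P_\mu} \;\cup\; \overline{\mathscr P_\nu} \;\cup\; \digamma
\]
the function $W$ vanishes identically: on the open paths by construction, on $\digamma$ by the normalisation $W(\digamma)=\{0\}$, and on $\mathcal C_\mu \cup \mathcal C_\nu$ by continuity of $W$ up to $\overline{\Delta}$. The strategy is to use the common limit point $p$ to isolate a bounded open subdomain $\Delta^{\circ} \subset \Delta$ whose topological boundary lies in $\mathcal Z$. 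Once such $\Delta^{\circ}$ is produced, the maximum principle delivers $W \equiv 0$ on $\Delta^{\circ}$, and the unique continuation property for harmonic functions then yields $W \equiv 0$ on all of $\Delta$, contradicting that $W$ is not constant. Concretely, pick $x_n \in \mathscr P_\mu$ and $y_n \in \mathscr P_\nu$ with $x_n, y_n \to p$, take the sub-arcs $\alpha_n \subset \mathscr P_\mu$ from $a$ to $x_n$ and $\beta_n \subset \mathscr P_\nu$ from $a$ to $y_n$, join $x_n$ to $y_n$ by a short connector through $p$ in $\overline{\Delta}$, and apply the Jordan curve theorem to the resulting closed loop $\Gamma_n \subset \mathcal Z$; a case analysis on whether the bounded component of $\mathbb S^2 \setminus \Gamma_n$ contains $\digamma$ identifies $\Delta^{\circ}$ as either that component or its difference with $\digamma$.

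The principal obstacle is the topological step of constructing the short connector through $p$ without any regularity hypothesis on $\partial \Delta$ at $p$. Since $\partial \Delta$ is a priori only a continuum and can be wild near $p$, a naive local slicing argument is not available. The plan to circumvent this is to uniformise $\Delta$ conformally as a round annulus $\phi : A(r,1) \onto \Delta$, where the boundary is smooth; by Carath\'eodory's prime-end theory the pulled-back paths have well-defined limit sets on the boundary circles, and the required connector can then be chosen as a short circular arc near the common prime end. Harmonicity in the plane is conformally invariant, so the maximum principle and unique continuation transfer back from $A(r,1)$ to $\Delta$ without obstruction.
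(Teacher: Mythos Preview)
Your treatment of the continuum property matches the paper's: parametrise $\mathscr P_\nu$ by $[0,\infty)$ and realise $\mathcal C_\nu$ as the nested intersection of the continua $\overline{\gamma([T,\infty))}$. Your added justification that $\gamma$ eventually leaves every compact subset of $\Delta$ is a welcome detail that the paper leaves implicit.

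The disjointness argument, however, has a genuine gap at exactly the point you flag as the ``principal obstacle.'' The proposed fix via conformal uniformisation $\phi: A(r,1)\onto\Delta$ does not work: a common limit point $p \in \mathcal C_\mu \cap \mathcal C_\nu \subset \partial\Delta$ does \emph{not} force the pulled-back paths $\phi^{-1}(\mathscr P_\mu)$ and $\phi^{-1}(\mathscr P_\nu)$ to share a limit point, let alone a prime end, on $\partial A(r,1)$. Distinct prime ends can have the same impression point in $\partial\Delta$; after uniformising, the two limit sets on the boundary circles may be disjoint, and then no ``short circular arc near the common prime end'' is available. Even if such an arc existed, $W\circ\phi$ need not extend continuously to $\partial A(r,1)$ when $\partial\Delta$ fails to be locally connected, so the maximum-principle step in the annulus is also in jeopardy. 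The Jordan-curve route therefore does not close.

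The paper bypasses all of this with Janiszewski's theorem: if two planar continua have disconnected intersection, their union separates $\mathbb R^2$. Here $\overline{\mathscr P_\mu}$ and $\overline{\mathscr P_\nu}$ meet in the disconnected set $\{a\}\cup(\mathcal C_\mu\cap\mathcal C_\nu)$ (one piece in $\Delta$, the other in $\partial\Delta$), so their union has a bounded complementary component. Its boundary lies in $\overline{\mathscr P_\mu}\cup\overline{\mathscr P_\nu}$, where $W\equiv 0$; combined with $W\equiv 0$ on $\digamma$, the maximum principle gives $W\equiv 0$ on an open subset of $\Delta$, and unique continuation finishes. No Jordan curve, no connector, and no boundary regularity are needed.
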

\begin{proof} To see that the right end-set $\overline{\mathscr P_\nu }\setminus \mathscr P_\nu  \subset \partial \Delta\,$  is indeed a continuum, we parametrize $\,\mathscr P_\nu\,$ via a homeomorphism $\, \phi :\, [0,\; \infty ) \, \onto \mathscr P_\nu\,\,,\; \phi(0) = a\;$. Thus $\,\mathcal C_\nu\,$ is the set of all possible limits of $\,\phi(t)\,$ as $\,t\,$ approaches  $\infty\,$. Consider the decreasing sequence of left-truncated paths
$$\,\mathscr P_\nu^0 \,\supset \,\mathscr P_\nu^1\, \supset\mathscr P_\nu^3 \,\supset ...   \;\,,\; \textnormal{where} \;\,\mathscr P_\nu^n\; \bydef \phi[n,\; \infty )\;,\; n = 0,1,2, ... $$
They are connected, so are their closures. Therefore, the limit-set $\,\mathcal C_\nu\,$, being the intersection of a decreasing sequence of continua, is a continuum as well. In symbols, 

$$\,\overline{\mathscr P_\nu^0} \,\supset \,\overline{\mathscr P_\nu^1}\, \supset\overline{\mathscr P_\nu^3} \,\supset ...  \;\;\textnormal{and} \;\;\bigcap_{ n \geqslant 0} \overline{\mathscr P_\nu^n} \; =\;\mathcal C_\nu\;,$$
as desired. \\
Now suppose, to the contrary, that two limit-sets $\,\mathcal C_\mu \,$ and   $\,\mathcal C_\nu \,$ , \,$\,\mu \not =\nu\,$, intersect. Thus we have two continua $\,\overline{\mathscr P_\nu}\,$  and  $\,\overline{\mathscr P_\mu}\,$  whose intersection is not connected; namely, it is a union  of two disjoint compact sets
$$
\overline{\mathscr P_\nu}\,\cap\,\overline{\mathscr P_\mu}\, =\, \big[\{a\}\big]\; \cup \; \big[\mathcal C_\mu \, \cap \mathcal C_\nu \,\big]\,\,
$$
In general the set $\,\big[\mathcal C_\mu \, \cap \mathcal C_\nu \,\big]\,$ can be very bizarre. Figure \ref {BranchesMeetingOnTheBoundary} \; illustrates  two paths $\,\overline{\mathscr P_\nu}\,$ (in black) and  $\,\overline{\mathscr P_\mu}\,$ (in white) that are approaching the same limit-set on $\,\partial \Omega\,$ , in a rather clear manner. However, for more complicated configurations of paths one needs to appeal to an elegant topological result established in 1913 by S.  Janiszewski \cite{Ja} , \cite{Kub}. 
\begin{theorem}  [Janiszewski] Let $\,\Gamma_1\,$ and $\,\Gamma_2\,$ be continua in $\,\mathbb R^2 \,$ whose intersection $\,\Gamma_1\,\cap\,\Gamma_2\,$is not connected. Then their union $\,\Gamma_1\,\cup\,\Gamma_2\,$ disconnects $\,\mathbb R^2\,$. 
\end{theorem}
Now we see that all bounded complementary components of the union $\,\overline{\mathscr P_\nu}\,\cup\,\overline{\mathscr P_\mu}\, \,$  lie in $\,\Delta\,$.  The harmonic function $\,W\,$ , turning to zero on the boundary of such components, must  vanish inside the components. By unique continuation property $\, W \equiv 0\,$ in $\,\Delta\,$, which is the desired contradiction.
\end{proof}

\section{Proof of Theorem \ref{MainTheorem}}
With the above preliminaries at hand the proof of Theorem \ref{MainTheorem} is based on the ideas and geometric arguments about harmonic dendrite of a  linear combination  $\,W(z) = \alpha \,U(z) \,+ \beta \, V(z)\,$ discussed above. The coefficients $\,\alpha, \beta\,$ are selected specifically for the given continuum $\,\digamma\Subset \Omega\,$ and the critical point  $\,a \in \Delta = \Omega \setminus \digamma\,$. 

\begin{proof} Suppose to the contrary, that $\,a\,$ is a rank-zero critical point of $\,H = U + i V\,$ ; that is, $\,\nabla U(a) = 0\,$ and $\,\nabla V(a) = 0\,$. Recall that $\,H : \partial \Omega \onto \mathfrak G\,$ is a monotone mapping onto a starlike continuum $\,\mathfrak G\,$. The so-called center of $\,\mathfrak G\,$ is located at the point $\,\mathfrak s \,$ defined by $\,   \{\mathfrak s\} =  H(\digamma)\,$. We may, and do, assume that $\,\mathfrak s = 0\,$, which is the origin of the coordinate system in the target plane $\, \mathfrak R^2 =\{(u,v)  ;\; u,\,v \in \mathbb R\,\}\,$.  Denote the image of the critical point by $\,\mathfrak a \bydef H(a)\,=  \mathfrak u \,+\,i \,\mathfrak v \, = U(a) \,+ i \,V(a)\,$. Now the real coefficients $\,\alpha\,,  \beta\,$ are chosen to satisfy : $\, \alpha^2 + \beta^2 = 1\, \,\textnormal{and} \;\,\alpha \,U(a) \; +\; \beta\, V(a) \,= 0\,$; that is,  $\,(\alpha ,\beta ) \,$ is the unit vector orthogonal to $\,\mathfrak a\,$. Note that $\,(\alpha, \beta)\,$  is determined uniquelly up to  its direction, except for the uncommon occasion of $\,\mathfrak a = \mathfrak s = 0\,$, in which case any unit vector serves the purpose.   We examine the following harmonic function in $\,\Delta\,$ that is continuous up to $\,\overline{\Omega}\,$, 
$$
W(z) \bydef \alpha\,U(z) \,+\, \beta\,V(z)\;,\; \textnormal{thus}\; W(a) = 0\;\;\textnormal{and}\;  W(z) \,\equiv 0\,\, \textnormal{in}\; \digamma\,.
$$
Since $\,a\,$ is assumed to be a rank-zero critical point of $\,H\,$, we have $\,\nabla W(a) = 0\,$, regardless of the choice of the coefficients $\,\alpha ,  \beta\,$ . \\
Next denote by $\,\mathfrak X \,$  the straight line  in the target plane \\$\,\mathfrak R^2 \bydef \{ (u,v) ;\; u , v \in \mathbb R \}\,$ passing through the point $\,\mathfrak a =  H(a) \bydef \mathfrak u \,+\,i \,\mathfrak v \, = U(a) \,+ i \,V(a)\,$ and the center $\,\mathfrak s= 0\,$ of the star-shape continuum $\, H(\partial \Omega)  = \mathfrak G\,$. According to Definition  \ref{Starshape}  the intersection 
\begin{equation}\label{LimitContinua}
\mathfrak X \,\cap\, \mathfrak G \;\;\textnormal{consists of at most two disjoint continua.}
\end{equation}

Their preimages  under the  monotone map $ \, H : \partial \Omega  \,\onto\, \mathfrak G\,$ are disjoint continua in $\,\partial \Omega\,$. In fact, these are  limit-sets of  the dendrite  $\,\mathfrak L_a\,$ that lie in $\partial \Omega\,$. The  remaining limit-sets of $\,\mathfrak L_a\,$ lie in $\,\partial \digamma\,$.  At the same time, according to Lemma  \ref{DisjointLimitSets}  , we have at least four Jordan paths $\,\mathscr P_1, \mathscr P_2, \mathscr P_3, \mathscr P_4,  (\, ...\,, \mathscr P_{2m})\,$ emanating from the critical point $\,a\,$ and terminating in mutually disjoint continua  $\,\mathcal C_1, \mathcal C_2, \mathcal C_3, \mathcal  C_4, (\, ...\,, \mathcal  C_{2m})\,$,  which are exactly the limit-sets of $\,\mathfrak L_a\,$. Only two of them may lie in $\,\partial \Omega\,$, because of (\ref{LimitContinua}). Thus we have at least two paths, say $\,\mathscr P_\nu\;,\, \mathscr P_\mu\,\;, 1 \leqslant \nu \not= \mu \leqslant 2m \,,\,$ which terminate in $\,\digamma\,$ (recall,  there are no closed loops in $\,\mathfrak L_a\,$). This situation is illustrated in Figure \ref{SophisticatedPaths}.
\begin{center}
 \begin{figure}[h]
 \includegraphics[angle=0, width=0.99 \textwidth]{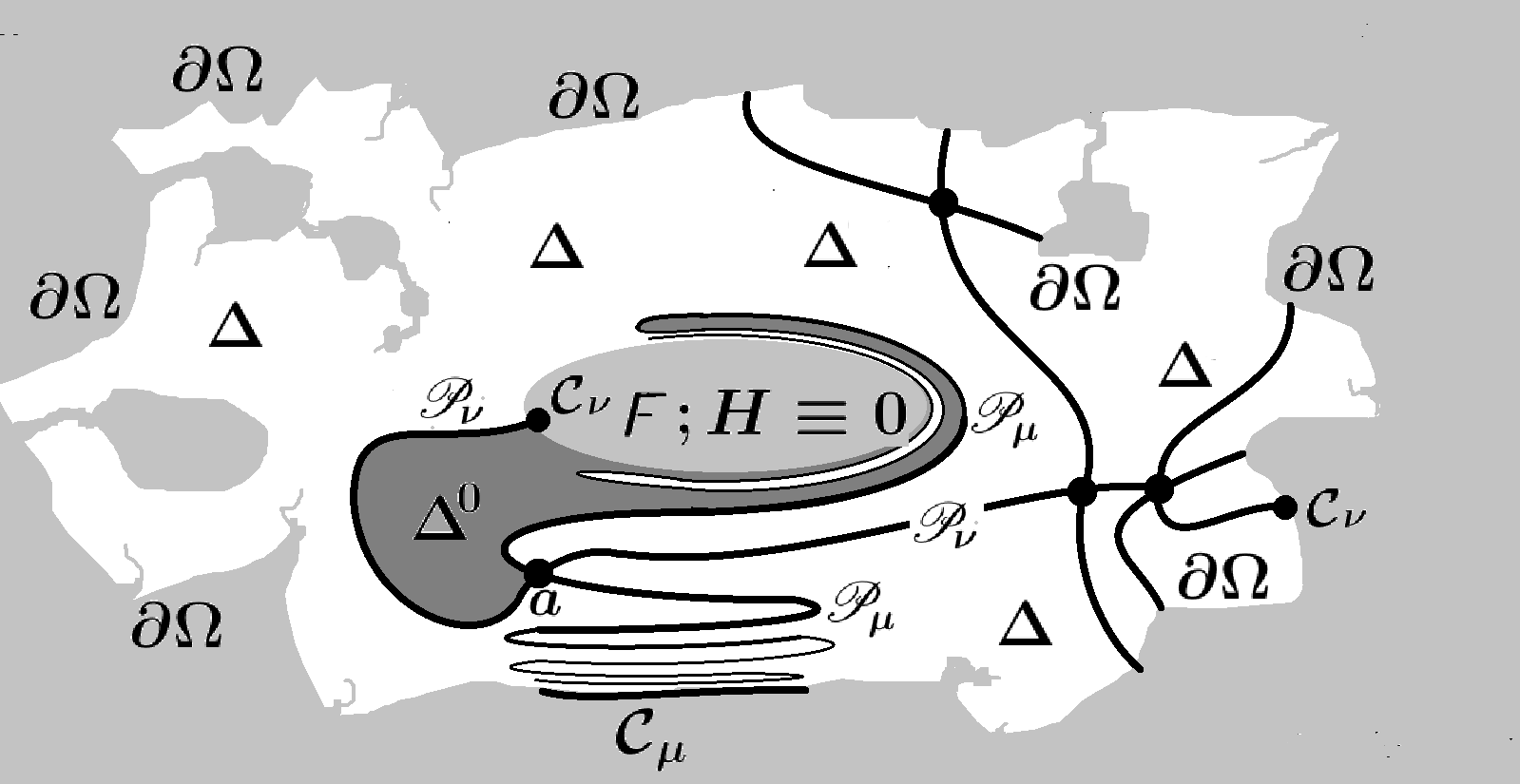}
\caption{ Each of the two  paths $\,\mathscr P_\mu\,$ and $\,\mathscr P_\nu \,$,  passing through a critical point $\,a \in \Delta\,$, connects $\,\partial \digamma\,$ and $\,\partial \Omega\,$ via its limit end-sets $\,\mathcal C_\mu\,$ and $\,\mathcal C_\nu\,$, respectively. }\label{SophisticatedPaths}
\end{figure}
\end{center}A concerned reader can dispense of this illustration and easily  argue (under the most general  circumstances)  by applying S. Janiszewski theorem \cite{Ja}  again.  For this application we look at  two continua $\, \Gamma_\nu = \overline{\mathscr P_\nu} \cup \digamma\,$  and $\, \Gamma_\mu = \overline{\mathscr P_\mu} \cup \digamma\,.$ Their intersection is not connected, $\, \Gamma_\nu \cap \Gamma_\mu \; =\; \{a\}\; \cup \,\textnormal{a  subset of}\; \digamma\,$ . Therefore,  their union $\, \Gamma_\nu \,\cup \,\Gamma_\mu\,$ disconnects $\,\mathbb R^2\,$. The bounded complementary component of  $\, \Gamma_\nu \cup \Gamma_\mu\,$ , denoted in Figure  \ref{SophisticatedPaths}\ by $\,\Delta^0\,$ , lies in $\,\Delta\,$ . As a consequence, the harmonic function $\,W\,$ (being equal to zero on $\,\partial \Delta^0\,$ )  must vanish in $\,\Delta^0\,$ and hence (by unique continuation property) on $\,\Delta\,$ as well.  This means that the non-constant map $\,H\,$ takes the entire boundary $\,\partial \Omega\,$  onto a continuum in $\, \mathfrak X\,$; that is, onto a straight line segment. 
Now the proof of  Theorem \ref{MainTheorem} is completed by the following simple observation.
\begin{lemma} \label{NoLineSegment}There is no monotone map of $ \partial \Omega\,$ \,onto a line segment.
\end{lemma}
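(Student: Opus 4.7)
The plan is to argue by contradiction. Suppose there exists a continuous monotone surjection $H : \partial \Omega \to [0, L]$ onto a nondegenerate line segment. The first step is to apply Whyburn's theorem (recalled in Remark~\ref{MonotonicityRemark}), which promotes pointwise monotonicity to the statement that preimages of connected subsets of $[0,L]$ are connected in $\partial\Omega$. For each $t \in (0, L)$ the sets
\begin{equation*}
A_t := H^{-1}([0,t]),\qquad B_t := H^{-1}([t, L]),\qquad K_t := A_t \cap B_t = H^{-1}(t)
\end{equation*}
are then subcontinua of $\partial\Omega$ with $A_t \cup B_t = \partial\Omega$; surjectivity of $H$ makes both $A_t$ and $B_t$ \emph{proper} in $\partial\Omega$, while $K_t$ is a nonempty connected subcontinuum.

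In the Jordan-curve case (the setting directly pertinent to classical starlike-shaping) the contradiction is immediate. Identifying $\partial\Omega$ with $S^1$, two proper subcontinua of $S^1$ are proper closed arcs; their union being all of $S^1$, and neither containing the other, they necessarily overlap at two distinct ``seams'' on the circle. Hence $K_t = A_t \cap B_t$ has two connected components, contradicting its connectedness.

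For a general simply connected $\Omega$ I would promote the combinatorial obstruction above to a cohomological one via Alexander duality. Because $\Omega \Subset S^2$ is simply connected, $\partial\Omega$ separates $S^2$ into $\Omega$ and the interior of $\mathbf{G}$, so $\check H^1(\partial\Omega;\mathbb Z) \neq 0$. Writing the reduced Mayer--Vietoris sequence for the open cover $S^2 \setminus K_t = (S^2 \setminus A_t) \cup (S^2 \setminus B_t)$ and invoking $\tilde H_1(S^2 \setminus K_t) \cong \tilde{\check H}^0(K_t) = 0$ (since $K_t$ is connected), one obtains an injection $\tilde H_0(S^2 \setminus \partial\Omega) \hookrightarrow \tilde H_0(S^2 \setminus A_t) \oplus \tilde H_0(S^2 \setminus B_t)$, so at least one of $A_t, B_t$ must also separate $S^2$. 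The step I expect to be the main obstacle is then to rule out this last possibility --- a proper subcontinuum of $\partial\Omega$ should not separate $S^2$. For Jordan (and more generally locally connected) boundaries this is classical; in general it is a Phragm\'en--Brouwer-type irreducibility statement for $\partial\Omega$ as the common boundary of $\Omega$ and the interior of $\mathbf{G}$, which may alternatively be secured by reducing to the circular case through Carath\'eodory's prime-end compactification of $\Omega$. Combined with the Mayer--Vietoris conclusion, this produces the desired contradiction.
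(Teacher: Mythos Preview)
Your Jordan-curve paragraph is correct and is essentially the paper's argument specialised to a circle. The paper works with the two endpoint fibres $H^{-1}(A)$ and $H^{-1}(B)$, observes (via Whyburn, exactly as you do) that the remaining set $H^{-1}\bigl((A,B)\bigr)$ is connected, and then asserts that the complement in $\partial\Omega$ of two disjoint subcontinua cannot be connected. Your $A_t$, $B_t$, $K_t$ splitting is the contrapositive of the same circle combinatorics; the paper's route is more elementary only in that it never invokes any homology.

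For general simply connected $\Omega$, however, the obstacle you single out is fatal, not merely technical. Proper subcontinua of $\partial\Omega$ \emph{can} separate $S^2$: take the slit disk $\Omega=\mathbb D\setminus[0,1)$, so that $\partial\Omega=S^1\cup[0,1]$, and note that the proper subcontinuum $S^1$ separates the sphere. In fact the lemma itself fails for this $\Omega$: the map that sends the radius $[0,1]$ identically to itself and collapses all of $S^1$ to the point $1$ is a continuous monotone surjection of $\partial\Omega$ onto the segment $[0,1]$. So no Phragm\'en--Brouwer or irreducibility statement of the kind you hope for is available, and the Mayer--Vietoris machinery, while correctly set up, cannot be completed. (Your parenthetical ``more generally locally connected'' is also off: $S^1\cup[0,1]$ is locally connected.)

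The paper's own proof rests on precisely the same separation principle---its key sentence ``the complement in $\partial\Omega$ of the union of two disjoint continua in $\partial\Omega$ cannot be connected'' is exactly what your duality argument is reaching for, and it likewise fails on the slit disk (remove $\{0\}$ and $\{-1\}$; the remainder is connected). Both arguments are valid once $\partial\Omega$ is a Jordan curve, which is the case covered by your first paragraph; beyond that, an extra hypothesis on $\partial\Omega$ is needed.
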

Indeed, such a monotone map, say  $\,H : \partial \Omega\, \onto [A , B]\,$,  would give us two disjoint continua $\,H^{-1}(A)  \subset \partial \Omega\,$  and $\,H^{-1}(B)  \subset \partial \Omega\,$. At the same time the  preimage $\,H^{-1}(A,B)\,$ of the remaining open interval would also be connected, see  \cite{Whyburn} or, for a simple proof, apply Lemma 20.1.3. in \cite{AIMb}  to closed subintervals of $\,(A, B)\,$ . However, the complement in $\partial \Omega\,$ of the union of two disjoint continua in $\partial \Omega\,$ cannot be connected. 
\end{proof}

\section{Proof of Theorem  \ref{pharmonicGradient} \, (static $\,p\,$-harmonic capacitor)} \label{Static-p-harmonicField}
The proof makes heavy use of  the local structure of the level set of a scalar $\,p\,$-harmonic function $\,u : \Delta \into \mathbb R\,$ near its critical point, as  presented in \cite{IOsimplyconnectedRKC}, see closely related earlier work \cite{AS, BI, IM, Alessandrini,  AL, IKO} .   \\

We may, and do, assume that the constant inner boundary value $\, C_{\textnormal{inn} } \,$, $\,\{ C_{\textnormal{inn }}\} \bydef u(\partial \digamma)\,$,  is smaller than that of the  outer boundary value $\,C_{\textnormal{out} } \,$,  $\,\{ C_{\textnormal{out}}\} \bydef u(\partial \Omega)\,$. \\
First we note, using the \textit{strong max/min  - principle} , see Section 6.5 of the monograph \cite{HKMb}     that  

\begin{equation} \label{MaxMinPrinciple}
 C_{\textnormal{inn }} < u(a)  \,<  C_{\textnormal{out}}\;, \; \textnormal{for every point }\, a \in \Delta\,.
\end{equation}

The crucial  observation, however,  is that  the complex gradient  $\, u_z  \bydef  \partial u / \partial z\,$  is a  nonconstant $\, K\,$ -quasiregular map \cite{BojIwanPreprint, BI, AL, IOsimplyconnectedRKC}\,. Its maximal distortion equals $\,K = \max\{p-1\,,\, 1/ p-1\}\,$.  Thus $ u_z\,$  admits only isolated zeros. 
Suppose, contrary to the assertion of Theorem \ref{pharmonicGradient} , that  $\, u_z \,$  vanishes at some  critical point  $\, a \in \Delta\,$; in symbols, $\,u_z(a) = 0\,$. Observe that the level set $\, \{z \in \Delta\,;\; u(z) = C \bydef u(a)\,\}\,$ is compact. This is because of (\ref{MaxMinPrinciple}). Such a level set  is generally not connected. Therefore, we restrict  further  discussion to  its connected component  that contains the critical point $\,a\,$. Denote it by  $\,\mathfrak D = \mathfrak D_a\,$ and name it \textit{critical $\,p\,$- harmonic dendrone} emerging from $\, a\,$.  We just reached  a branchlike topological structure that requires detailed analysis. For the sake of clarity, let us specify some mathematical terms. 
\begin{itemize}

\item A Jordan closed arc is an image of  a homeomorphism $\, f : [ 0, 1] \rightarrow \mathbb R^2\,$  (continuous bijective mapping). 

\item A Jordan curve (also called Jordan loop)  is the image of a  continuous mappings $\,f: [0,1] \rightarrow \mathbb R^2\,$  which satisfies:  $\,f(0) = f(1)\,$,  and the restriction of $\,f\,$ to the half open interval $\,[0,1)\,$ is injective. 

\item  One might find it convenient to look at the  endpoints of $\,J \bydef f[0,1]\,$; one is  $\,  J^-\, \bydef\,f(0)\,$, and the other $\,  J^+\,\bydef\,f(1)\,$.  In case of the Jordan curve (closed loop) we have   $\,  J^-\, =\, J^+ \bydef J^\mp\,$. 
\item  The image $\,f(0,1)\,$ of the remaining open interval  is  a Jordan open arc called the interior of $\,J\,$ and denoted by $\, J^\circ\,$. 
One must be aware that not every homeomorphic image of $\,(0\,,\,1)\,$ can be obtained that way; its closure may neither be a Jordan closed arc nor a  Jordan loop. 
\item Let us give a common name \textit{Jordan contours} to both Jordan closed arcs  and  Jordan loops, and write
$$
J = J^- \cup J^\circ \cup J^+\;,  \; \textnormal{also }\, \; J \bydef  [J^-, J^+]\,,
$$
whenever clarity requires that the endpoints be indicated.
\end{itemize} 
Here are some properties of the dendrone $\,\mathfrak D \,$ that are characteristic of a graph.

\begin{itemize}
\item $\,\mathfrak D\,$ is a finite union of Jordan contours, 
\begin{equation}\label{ContoursOfDendrone}
\mathfrak D  = {J_1} \,\cup\,{J_2} \,  \cup ... \cup {J_N} 
\end{equation}
\item Their interiors $\,  J_1^\circ \,,\, J_2^\circ\,,  ... , \,J_N^\circ \, $ are mutually disjoint.  
\item Their endpoints $\,J_1^{\pm} \,,\,\,J_2^{\pm} , ... , \,J_N^{\pm}\,$ are critical points of $\,u\,$, also referred to as  \textit{nodes} of the dendrone. \\
 At this stage we appeal to Corollary 10 in \cite{IOsimplyconnectedRKC}. to infer that:
\item From every node  there  emanate an even number, say   $\,2 m\, \geqslant 4\,$, of closed Jordan subarcs  of $\,\mathfrak D\,$. The intersection of any two of them is just that node.  The case where two such subarcs are  parts of  the same Jordan contour $\,J\,$ is not excluded, which happens when $\,J\,$ is a Jordan loop and its endpoints $\,J^-\,,\, J^+\,$ coincide with that node.  
\end{itemize}
The above properties of a  dendrone fit together into the general scheme of a planar graph.  This concept emerges most clearly when the setting is quite abstract. \\

Accordingly, a graph is a pair $\,\mathfrak G = ( V, E )\,$ of  a nonempty set $\,V\,$  (called the vertices) and a set $\,E\,$   of two-element subsets of $\,V\,$  (called the edges) . Now we view the $\,p\,$-harmonic dendrone $\,\mathfrak D\,$ as a graph in which the Jordan contours  $\,{J_1} \,,\,{J_2} \,,\, ... \,, {J_N} \,$ play the role of edges, whereas  their endpoints  are the vertices (nodes of $\,\mathfrak D\,$). The graph of a dendrone is called \textit{planar} because it is connected and can be  drawn without edges crossing (precisely, the interiors $\,  J_1^\circ \,,\, J_2^\circ\,,  ... , \,J_N^\circ \, $ are mutually disjoint). 
In Graph Theory, the number $\, \textnormal{deg } v\,$  of edges emanating from  a given vertex $\,v \in V\,$   is called the degree of that vertex. In case of our dendrone at every node  the degree is an even number, say   $\,2 m \geqslant 4\,$.  The so-called \textit{Handshake Lemma}  tells us that the sum of the degrees of vertices is always twice the  number of edges; in symbols, 

\begin{equation} \label{HandShakeLemma} 
\sum_{v \in V}  \textnormal{deg } v\,  = \, 2\, | E | \;. \;   \textnormal{Hence } \, \; N = |\, E \,| \geqslant 2 |\,V\,|\,.  
\end{equation} 
 Every planar graph divides the plane into regions called faces  (open connected sets). We let  $\,F\,$ denote the set of faces, including the one unbounded region. 
Euler's  celebrated formula  for connected planar graphs reads ad:

\begin{equation} \label{EulerFormula} 
|\,F\,|  = |\,E\,|\, -\, |\,V\,|\, + 2
\end{equation}
 Returning to our $\,p\,$-harmonic dendrone $\,\mathfrak D\,$, the complement  $\,\mathbb R^2 \setminus \mathfrak D\,$ consists of a finite  number of  so-called complementary components. Each component is surrounded by a number of Jordan contours from  (\ref{ContoursOfDendrone}). These are exactly the faces of the graph of $\,\mathfrak D\,$. Our ultimate goal is to demonstrate, using Euler's formula,  that
\begin{lemma} \label{ComplementsOfDendrone}
There are at least two bounded complementary components of $\,\mathfrak D\,$. 
\end{lemma}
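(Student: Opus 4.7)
The plan is to combine the Handshake Lemma with Euler's formula in the form already recorded in the excerpt, and then just count faces.

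First I would record that the critical point $a$ itself is a node of $\mathfrak D$, so $|V|\geq 1$. Since every node has even degree $2m\geq 4$, the Handshake Lemma gives
\[
2|E|=\sum_{v\in V}\deg v\;\geq\;4|V|,
\]
which is exactly the inequality $|E|\geq 2|V|$ displayed in \eqref{HandShakeLemma}. Substituting this into Euler's formula \eqref{EulerFormula} for the connected planar graph $\mathfrak D$,
\[
|F|\;=\;|E|-|V|+2\;\geq\;2|V|-|V|+2\;=\;|V|+2\;\geq\;3.
\]

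Next I would observe that, because $\mathfrak D\subset\Delta$ is compact, exactly one face of the graph is unbounded (the face containing $\infty$). Every other face is a bounded complementary component of $\mathbb R^2\setminus\mathfrak D$. Consequently the number of bounded complementary components is at least $|F|-1\geq 2$, which is the desired conclusion.

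The only point that genuinely requires the structure theory developed earlier is the even-degree condition $\deg v\geq 4$ at every node; everything else is graph-theoretic bookkeeping. For that reason I do not expect a true obstacle here, but the step to be most careful about is ensuring that the graph description of $\mathfrak D$ in \eqref{ContoursOfDendrone} really is a connected planar graph in the sense required by Euler's formula, i.e.\ that the decomposition into Jordan contours with mutually disjoint interiors, together with the node set, defines a legitimate planar embedding. Since this is precisely what was established in the preceding paragraphs, the lemma then follows at once from the chain of inequalities above.
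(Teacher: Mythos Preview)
Your proof is correct and follows essentially the same route as the paper: combine the Handshake Lemma inequality $|E|\geq 2|V|$ with Euler's formula to get $|F|\geq|V|+2\geq 3$, then discard the single unbounded face. The paper's own proof is exactly this chain of inequalities, so there is nothing to add.
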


  \begin{proof}  
  Having disposed of Euler's formula (\ref{EulerFormula}) and  Equation (\ref{HandShakeLemma}), the proof of this lemma is a matter of straightforward estimate  of the number of all complementary components, including the unbounded one. 
\begin{equation} \label{EulerFormula} 
|\,F\,|  = |\,E\,|\, -\, |\,V\,|\, + 2 \; \geqslant \, 2 |V\,|  \,-\,|\,V\,| \,+2  \,= \,|\,V\,| \,+2 \, \geqslant\, 3
\end{equation}
as desired. 
\end{proof}
We shall now establish the proof of Theorem  \ref{pharmonicGradient} by noticing that each bounded component of $\,\mathbb R^2 \setminus \mathfrak D\,$ must contain the hole $\,\digamma \Subset \Omega\,$. Otherwise, such a component would lie entirely in $\,\Delta = \Omega \setminus \digamma\,$ \,( because $\,C_{\textnormal{inn }} < u(a) \,$  ) and $\, u\,$ would be constant in this component. This means that $\, u\,$ would be constant in $\,\Delta\,$ (by unique continuation) , which is ruled out. On the other hand, we have at least two bounded components of  $\,\mathbb R^2 \setminus \mathfrak D\,$. They are disjoint, by the very definition. Finally, we arrive at the required contradiction  since  $\,\digamma\,$, being a continuum,  cannot lie in two different components. \\
The proof of Theorem \ref{pharmonicGradient} is complete.

\section{Cassini Capacitor with Three  Electrical Conductors} 

Most of the natural capacitors indeed consist
of two electrical conductors.  Here we discuss an example of  a harmonic \textit{dielectric medium}  within three conductors; that is, in $\,\Delta = \Omega \setminus (\digamma_+\cup \digamma_-\,),$ where $\,\Omega\,$ is a bounded simply connected domain and $\,\digamma_+ \Subset \Omega\,$,\, $\,\digamma_- \Subset \Omega\,$ \,are disjoint continua (holes in $\Omega\,$).  The elecrostatic potential $\, u  : \Delta  \into \mathbb R\,$  assumes constant vales on each of three conductors : $\, u \equiv c_0 \,$ on $\,\partial \Omega\,$ , $\, u \equiv c_+ \,$ on $\,\partial \digamma_+\,$ \, and\, $\, u \equiv c_- \,$ on $\,\partial \digamma_-\,$. In contrast to Theorem  \ref{pharmonicGradient},   gradient of $\,u\,$ may vanish at some points in $\,\Delta\,$, even when $\,c_+ = c_- \bydef c \not = c_0\,$. In other words, Theorem \ref{pharmonicGradient} fails already for $\,p = 2\,$ when $\,\digamma\,$ is not connected. Our example is as follows.

\begin{example}\label{ThreeConductors} Consider a harmonic function
\begin{equation} \label{aHarmonicFunction}\,u(z) = \log |z^2 - 1| \;,\,\, \textnormal{defined for} \;\; z \not = \pm 1\, .
\end{equation}
For a given parameter $\, \lambda \in \mathbb R\,$ the  level set $\, \mathscr L_\lambda \,\bydef \{ z ;\; u(z)  = \lambda \}\, $ is a quartic curve defined as the locus of points whose product of the distances to two points $\,\pm 1\,$ (referred to as foci) is constant equal to $\, e^\lambda\,$.  This is none other than the familiar \textit{Cassini oval},  named after the astronomer Giovanni Domenico Cassini \cite{Cassini} .  The interested reader is referred to the catalog of Dennis Lawrence  \cite{Lawrence}  and \cite {Gray} for details. 
Cassini oval is a   $\,\mathscr C^\infty\,$-smooth Jordan curve when $\,\lambda > 0\,$. It becomes a lemniscate of Bernoulli  when $\,\lambda = 0\,$,  and consists of two  $\,\mathscr C^\infty\,$-smooth Jordan curves when $\,\lambda < 0\,$. \\

Choose and fix  parameters $\, 0 < r < 1 < R\,$.  Our simply connected domain $\,\Omega \,$  is enclosed by the $\,\mathscr C^\infty\,$-smooth Jordan curve 
\begin{equation}
\partial \Omega  \,\bydef\, \{ z ; \; |\,z^2  - 1 \,| \, = R\,\} \;\;,\; \;\;u \equiv  \log R\, > 0\;\; \textnormal{ on} \; \partial \Omega
\end{equation}
Whereas the boundaries of $\,\digamma_+ \,$ and $\,\digamma_-  \,$ are determined by the equation 

\begin{equation}
\partial \digamma_\pm  \,=\, \{ z ; \; |\,z^2  - 1 \,| \, = r\,\}\,,  \;\;,\; \;\;u \equiv  \log r\, < 0\;\; \textnormal{ on} \; \partial \digamma_\pm
\end{equation}
whenever  $\,\Re z \,$ is positive or negative, respectively. 
The critical level curve is the leminiscate of Bernoulli passing through the origin. 
See Figure \ref{Cassini Capacitor}

\end{example}

\begin{center}
\begin{figure}[h] 
 \includegraphics[angle=0, width=0.90 \textwidth]{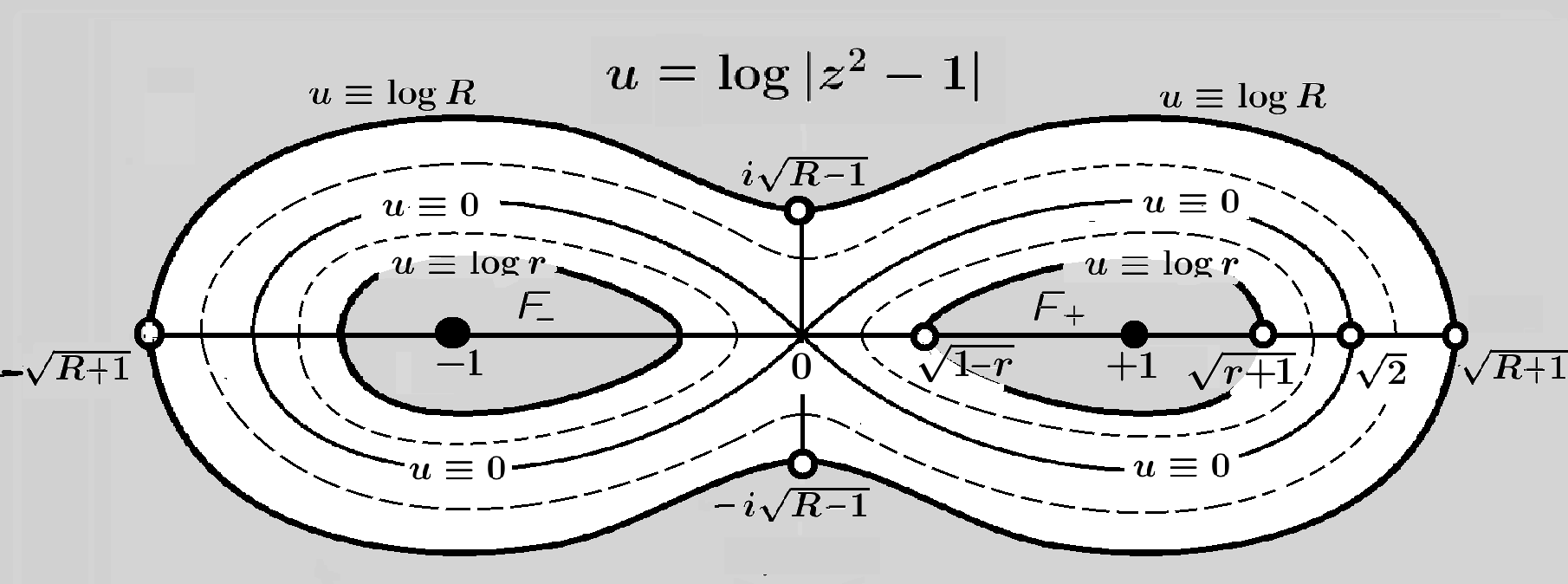}
\caption{ Cassini Capacitor and the critical level curve $\,u \equiv 0\,$ along a lemniscate of Bernoulli of foci $\,\pm 1\,$ } \label{Cassini Capacitor}
\end{figure}
\end{center}
\newpage

\bibliographystyle{amsplain}

\end{document}